\newtheorem{remark}{Remark}[section]
\DeclarePairedDelimiter{\ceil}{\lceil}{\rceil}
\colorlet{linkequation}{cyan}
\newcommand*{\SavedEqref}{}
\let\SavedEqref\eqref
\renewcommand*{\eqref}[1]{%
  \begingroup
    \hypersetup{
      linkcolor=linkequation,
      linkbordercolor=linkequation,
    }%
    \SavedEqref{#1}%
  \endgroup
}
\newcommand{\clonelabel}[2]{\@bsphack
  \expandafter\ifx\csname r@#2\endcsname\relax
  \else\protected@write\@auxout{}{\string\newlabel{#1}%
    {\csname r@#2\endcsname}}%
  \fi
  \expandafter\ifx\csname r@#2@cref\endcsname\relax
  \else\protected@write\@auxout{}{\string\newlabel{#1@cref}%
    {\csname r@#2@cref\endcsname}}%
  \fi
  \@esphack}
\begin{document}

\title{Dirichlet-Neumann Waveform Relaxation
  Algorithm for Time Fractional Diffusion Equation in Heterogeneous Media}

\author{Soura Sana\footnotemark[1]\and Bankim C Mandal\footnotemark[2]}

\maketitle

\begin{abstract}
 In this article, we have studied the convergence behavior of the Dirichlet-Neumann waveform relaxation algorithms for time-fractional sub-diffusion and diffusion wave equations in 1D \& 2D for regular domains, where the dimensionless diffusion coefficient takes different constant values in different subdomains. From numerical experiments, we first capture the optimal relaxation parameters. Using these optimal relaxation parameters, our analysis estimates the rate of change of the convergence behavior against the fractional order and time. We have performed our analysis in multiple subdomain cases for both in 1D \& 2D.

\end{abstract}

\begin{keywords}
	Dirichlet-Neumann, Waveform Relaxation, Domain Decomposition, Sub-diffusion, Diffusion-wave
\end{keywords}

\begin{AMS}
	65M55, 34K37
\end{AMS}

\renewcommand{\thefootnote}{\fnsymbol{footnote}}
\footnotetext[1]{School of Basic Sciences, IIT Bhubaneswar, India ({\tt ss87@iitbbs.ac.in}).}
\footnotetext[2]{School of Basic Sciences, IIT Bhubaneswar, India ({\tt bmandal@iitbbs.ac.in}).}

\section{Model problem}
We aim to extend the DNWR algorithm for time-fractional diffusion equations. The model problem we have taken is linear, but the time derivative is of fractional order $2\nu$, where $\nu$ can take any values from zero to one. Therefore, our model incorporates three fundamental basic PDEs: elliptic, parabolic, and hyperbolic. More details about the model and its application can be found in \cite{evangelista2018fractional,lenzi2016anomalous}. To define the sub-diffusion and diffusion-wave, first, we consider the domain $\Omega$ in $\mathbb{R}^d$, and then we have:
\begin{equation}\label{model_problem}
\begin{cases}
\frac{\partial^{2\nu}}{\partial t^{2\nu}}u = \nabla\cdot\left(\kappa(\boldsymbol{x},t)\nabla u\right)+f(\boldsymbol{x},t), & \textrm{in}\; \Omega\times(0,T),\\
u(\boldsymbol{x},t) = g(\boldsymbol{x},t), & \textrm{on}\; \partial\Omega\times(0,T),\\
u(\boldsymbol{x},0) = u_{0}(\boldsymbol{x}), & \textrm{in}\; \Omega.
\end{cases}
\end{equation}
where $\kappa(\boldsymbol{x},t)>0$ be the dimensionless diffusion coefficient and $\frac{\partial^{\alpha}}{\partial t^{\alpha}}$ is the Caputo fractional derivative defined for order $\alpha$, $n-1<\alpha<n$ and $n \in \mathbb{N}$  as follows:
$$\frac{\partial^{\alpha}}{\partial t^{\alpha}}x(t) := \frac{1}{\Gamma(n-\alpha)}\int_0^t(t-\tau)^{n-\alpha-1}x^{(n)}(\tau)d\tau. $$
For further analysis, we assume that the solution $u(\boldsymbol{x},t)$ and the flux is continuous even for finite jump discontinuous $\kappa(\boldsymbol{x},t)$ in space. In support of the existence and uniqueness of the weak solution of \eqref{model_problem} we refer to \cite{van2021existence}. With this assumption, we next introduce the DNWR algorithm:

\section{The Dirichlet-Neumann Waveform Relaxation algorithm}\label{Section2}
Many forms of DNWR algorithm depend on the placement of Dirichlet-Neumann, Neumann-Neumann, and Dirichlet-Dirichlet boundary conditions already exist in literature; for more details, see \cite{gander2021dirichlet}. Here we have chosen the particular one, which is given in Figure \ref{fig1}. The reason behind this combination is that it can handle long-time processes much better than other ones. So to use this combination in model problem \eqref{model_problem}, we subdivide the domain $\Omega$ into $N$ subdomains $\Omega_{i}$, $1\leq i\leq N$ with no overlaps and no cross points.
\begin{figure} \label{fig1}
  \centering
  \includegraphics[width=1\textwidth]{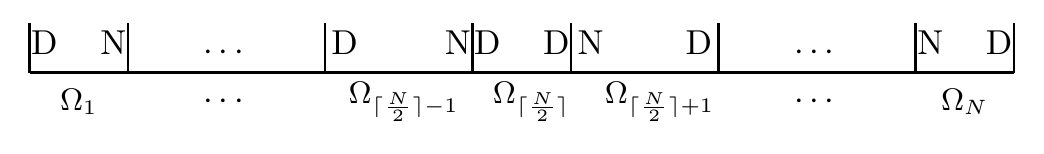}
 \vspace{-2.5em}
  \caption{Particular form of DNWR algorithm.}
\end{figure}
Set original boundary $\Gamma_i:=\partial\Omega\cap\partial\Omega_i, 1\leq i \leq N$ and artificial one as
$\Lambda_i:= \partial\Omega_{i}\cup\partial\Omega_{i+1}$ for $1\leq i \leq N-1$. We choose $\psi_{i}^{(0)}(\boldsymbol{x},t)$ as an initial guess in some appropriate space along the interfaces $\Lambda_{i}\times (0,T)$. Then DNWR algorithm reads:
\begin{equation}\label{DNWRe1}
  \begin{array}{rcll}
  \begin{cases}
   \frac{\partial^{2\nu}}{\partial t^{2\nu}}u_{i}^{(k)} = \nabla\cdot\left(\kappa(\boldsymbol{x},t)\nabla u_{i}^{(k)}\right) + f, & \mbox{in $\Omega_{i}$},\\
    u_{i}^{(k)}(\boldsymbol{x},0) = u_{0}(\boldsymbol{x}), & \mbox{in $\Omega_{i}$},\\
    u_{i}^{(k)} = g, & \mbox{on $\Gamma_i$},
   \end{cases}
  \end{array}
\end{equation}
with the boundary conditions along the artificial interfaces: for $i <\ceil{N/2}$
\begin{equation}
	\begin{cases*}
		u_{i-1}^{(k)} = \psi_{i-1}^{(k-1)},& \mbox{on $\Lambda_{i-1}, i \neq 1$},\\
		\partial_{\boldsymbol{n}_{i,i+1}}u_{i}^{(k)} = -\partial_{\boldsymbol{n}_{i+1,i}}u_{i+1}^{(k)}, & \mbox{on $\Lambda_{i}$}
	\end{cases*}
\end{equation}
for $i =\ceil{N/2}$
\begin{equation}
	\begin{cases*}
		u_{i-1}^{(k)} = \psi_{i-1}^{(k-1)},  & \mbox{on $\Lambda_{i-1}$},\\
		u_{i}^{(k)} = \psi_{i}^{(k-1)}, & \mbox{on $\Lambda_{i}$},
	\end{cases*}
\end{equation}
for $i >\ceil{N/2}$
\begin{equation}
	\begin{cases*}
		\partial_{\boldsymbol{n}_{i,i-1}}u_{i}^{(k)} = -\partial_{\boldsymbol{n}_{i-1,i}}u_{i-1}^{(k)}, & \mbox{on $\Lambda_{i-1}$}\\
	     u_{i}^{(k)} = \psi_{i}^{(k-1)},& \mbox{on $\Lambda_{i}, i \neq N$},
	\end{cases*}
\end{equation}
and then with the relaxation parameter $\theta\in(0,1]$ update the interface data using
\begin{equation}\label{DNWRe2}
	\begin{cases*}
  h_{i}^{(k)}(\boldsymbol{x},t)=\theta u_{i}^{(k)}\left|_{\Lambda_{i}\times(0,T)}\right.+(1-\theta)\psi_{i}^{(k-1)}(\boldsymbol{x},t), \quad 1 \leq i < \ceil{N/2} \\
   h_i^{(k)}(\boldsymbol{x},t)=\theta u_{i+1}^{(k)}\left|_{\Lambda_{i}\times(0,T)}\right.+(1-\theta)\psi_i^{(k-1)}(\boldsymbol{x},t), \quad \ceil{N/2} \leq i \leq N-1.
  	\end{cases*}
\end{equation}
 Our goal is to study the convergence of  $w_{i}^{(k-1)}(\boldsymbol{x},t) := u_i|_{\Gamma_i\times(0,T)} - \psi_{i}^{(k-1)}(\boldsymbol{x},t)$ as $k \to \infty$. 
However, before going to the main theorems on convergence, we need some estimates, which we will discuss in the next section.

\section{Auxiliary Results}\label{Section4}
In this section, we first discuss a few norm inequalities and then some kernel estimates, which are necessary for subsequent sections.
% New lemma 1
\begin{lemma} \label{SimpleLaplaceLemma}
Let $g$ and $w$ be two real-valued functions in $(0,\infty)$ with
$\hat{w}(s)=\mathcal{L}\left\{ w(t)\right\}$ the Laplace transform of
$w$. Then for $t\in(0,T)$, we have the following properties:
\begin{enumerate}
  \item[(i)] \label{L2} $\| g*w\|_{L^{1}(0,t)}\leq\| g\|_{L^{1}(0,t)}\| w\|_{L^{1}(0,t)}.$

  \item[(ii)] \label{L3} $\|g*w\|_{L^{\infty}(0,t)} \leq \| g\|_{L^{\infty}(0,t)} \|w\|_{L^{1}(0,t)}.$
  
 \item[(iii)] \label{L5}If $w(\tau)\geq0$ is $L^1$-integrable in $(0,t)$, then $\|w(.)\|_{L^1(0,t)}\leq \lim_{s \to 0+}\hat{w}(Re(s))$.
\end{enumerate}
\end{lemma}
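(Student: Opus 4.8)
The plan is to prove the three parts of Lemma~\ref{SimpleLaplaceLemma} in sequence, treating (i) and (ii) as warm-up applications of standard convolution estimates and reserving the real work for part (iii), which connects an $L^1$-norm to a limit of the Laplace transform.

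For part (i), I would start directly from the definition of the convolution $(g*w)(t)=\int_0^t g(t-\tau)w(\tau)\,d\tau$ and integrate its absolute value over $(0,t)$. Applying the triangle inequality for integrals and then Fubini's theorem to swap the order of integration over the triangular region $\{0\le\tau\le\sigma\le t\}$, the inner integral $\int_\tau^t |g(\sigma-\tau)|\,d\sigma$ is bounded by $\|g\|_{L^1(0,t)}$ after a change of variable, leaving $\|g\|_{L^1(0,t)}\int_0^t|w(\tau)|\,d\tau$; this is exactly the claimed product bound. Part (ii) is even more direct: in $|(g*w)(t)|\le\int_0^t|g(t-\tau)|\,|w(\tau)|\,d\tau$ I would pull out the essential supremum $\|g\|_{L^\infty(0,t)}$ and bound the remaining integral by $\|w\|_{L^1(0,t)}$, then take the supremum over $t$. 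Both of these are routine and I would keep them brief.

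The substance is in part (iii). Here $w\ge 0$, so $\|w\|_{L^1(0,t)}=\int_0^t w(\tau)\,d\tau$ with no absolute values. My plan is to use the fact that $\hat{w}(s)=\int_0^\infty e^{-s\tau}w(\tau)\,d\tau$ and that for real $s>0$ the integrand $e^{-s\tau}w(\tau)$ is nonnegative and increases monotonically to $w(\tau)$ as $s\to 0^+$. I would invoke the Monotone Convergence Theorem to pass the limit inside the integral, giving $\lim_{s\to 0^+}\hat{w}(s)=\int_0^\infty w(\tau)\,d\tau$. Since $w\ge 0$, the partial integral over $(0,t)$ is dominated by the integral over $(0,\infty)$, yielding $\|w\|_{L^1(0,t)}=\int_0^t w\le\int_0^\infty w=\lim_{s\to 0^+}\hat{w}(s)$, which is the desired inequality (with $s$ real, matching the $\mathrm{Re}(s)$ in the statement).

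The main obstacle, such as it is, lies in justifying the interchange of limit and integral in part (iii) cleanly, and in correctly interpreting the role of $\mathrm{Re}(s)$: the statement evaluates $\hat{w}$ at the real part of $s$, so I must make clear that the limit is taken along the positive real axis, where nonnegativity of $w$ makes the Monotone Convergence Theorem directly applicable. I would note that if the right-hand limit is $+\infty$ the inequality holds trivially, and otherwise finiteness of $\lim_{s\to 0^+}\hat{w}(s)$ forces $w\in L^1(0,\infty)$, so the bound over the finite interval $(0,t)$ follows at once. No delicate estimates are needed beyond this monotone-limit argument.
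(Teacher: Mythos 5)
Your proposal is correct: parts (i) and (ii) are the standard Young-type convolution bounds via Fubini and pulling out the essential supremum, and part (iii) follows from nonnegativity of $w$ together with monotone convergence of $\int_0^\infty e^{-s\tau}w(\tau)\,d\tau$ as $s\downarrow 0$ along the real axis, plus the trivial bound $\int_0^t w\le\int_0^\infty w$. The paper states this lemma without proof, treating these as standard facts, and your argument is exactly the expected one.
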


% New Lemma 2
\begin{lemma} \label{invL_expLemma}
	For $\alpha \in (0,1)$ and $l,t>0$, the following results hold:
	\begin{enumerate}
		\item[(i)]  The inverse Laplace transform of $  e^{-l s^{\alpha}}$ is:
		\begin{equation*} 
			\mathcal{L}^{-1}\left\{e^{-l s^{\alpha}}\right\} 
			=l\alpha t^{-(\alpha+1)}M_{\alpha}\left(lt^{-\alpha}\right),
		\end{equation*} 
		where $M_{\alpha}(x), x\in(0,\infty)$ be the M-Wright function.
		
		\item[(ii)]
		\begin{equation*} 
			\left\|\mathcal{L}^{-1}\left\{e^{-l s^{\alpha}}\right\}\right\|_{L^1(0,t)} \leq \exp\left(-(1-\alpha)\left(\frac{\alpha}{t}\right)^{\alpha/(1-\alpha)}l^{1/(1-\alpha)}\right).
		\end{equation*}  
	\end{enumerate}
\end{lemma}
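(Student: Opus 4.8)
The plan is to prove the two parts in sequence, with part (i) supplying the explicit density representation that reduces part (ii) to a short convexity argument.

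For (i), I would start from the everywhere-convergent expansion $e^{-ls^{\alpha}}=\sum_{n=0}^{\infty}\frac{(-l)^{n}}{n!}s^{\alpha n}$ and invert term by term using $\mathcal{L}^{-1}\{s^{\beta}\}(t)=t^{-\beta-1}/\Gamma(-\beta)$, valid for $\beta$ not a non-negative integer. The $n=0$ term is annihilated by the pole of the Gamma function ($1/\Gamma(0)=0$), leaving $\sum_{n\geq1}\frac{(-l)^{n}}{n!}\frac{t^{-\alpha n-1}}{\Gamma(-\alpha n)}$. Reindexing with $m=n-1$ and invoking the recursion $\Gamma(1-\alpha n)=-\alpha n\,\Gamma(-\alpha n)$ rewrites the series as $l\alpha t^{-(\alpha+1)}\sum_{m\geq0}\frac{(-lt^{-\alpha})^{m}}{m!\,\Gamma(1-\alpha-\alpha m)}$, which is precisely $l\alpha t^{-(\alpha+1)}M_{\alpha}(lt^{-\alpha})$ from the series definition of the M-Wright function. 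The delicate point is the legitimacy of the termwise inversion: the powers $s^{\alpha n}$ grow in $n$ and $s\mapsto s^{\alpha}$ carries a branch cut, so I would justify the interchange by verifying that the resulting $t$-series converges locally uniformly for $t>0$ (the M-Wright function is entire of order $1/(1-\alpha)$) and by appealing to the standard Mellin--Barnes / saddle-point identification of $M_{\alpha}$ as the density of the one-sided $\alpha$-stable law.

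For (ii), the essential consequence of (i) is that $w(\tau):=\mathcal{L}^{-1}\{e^{-ls^{\alpha}}\}(\tau)=l\alpha\tau^{-(\alpha+1)}M_{\alpha}(l\tau^{-\alpha})$ is nonnegative on $(0,\infty)$, since $M_{\alpha}\geq0$ there. I would then run an exponential-tilting (Chernoff) estimate: for every $s>0$ and every $\tau\in(0,t)$ one has $e^{-s\tau}\geq e^{-st}$, whence $w(\tau)\leq e^{st}e^{-s\tau}w(\tau)$; integrating over $(0,t)$ and enlarging the integral to $(0,\infty)$ yields
\[
\|w\|_{L^{1}(0,t)}=\int_{0}^{t}w(\tau)\,d\tau\leq e^{st}\int_{0}^{\infty}e^{-s\tau}w(\tau)\,d\tau=e^{st}\,\hat{w}(s)=e^{\,st-ls^{\alpha}}.
\]

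Finally I would optimize the free parameter. Minimizing $\phi(s)=st-ls^{\alpha}$ over $s>0$ gives $s^{\ast}=(l\alpha/t)^{1/(1-\alpha)}$, a genuine minimum since $\phi''>0$, and substitution collapses the bound to $\exp\!\big(-(1-\alpha)(\alpha/t)^{\alpha/(1-\alpha)}l^{1/(1-\alpha)}\big)$, exactly the claim. I expect the only real obstacle to lie in part (i): the rigorous justification of the term-by-term inversion and, bound up with it, the nonnegativity $M_{\alpha}\geq0$ on which the entire tilting argument rests; part (ii) is then essentially immediate. I note that the naive route through Lemma~\ref{SimpleLaplaceLemma}(iii) only gives $\|w\|_{L^{1}(0,t)}\leq\lim_{s\to0^{+}}e^{-ls^{\alpha}}=1$, so the exponential tilting is precisely the device that sharpens the trivial estimate into the stated exponential decay.
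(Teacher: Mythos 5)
Your proposal is correct and follows essentially the same route as the paper: part (i) is the classical termwise-inversion identity giving the M-Wright (one-sided stable) density, and part (ii) is the exponential-tilting bound $\|w\|_{L^1(0,t)}\leq e^{st}\hat{w}(s)=e^{st-ls^{\alpha}}$, valid by the nonnegativity of $M_{\alpha}$, optimized at $s^{*}=(l\alpha/t)^{1/(1-\alpha)}$. Your closing observation is also apt: Lemma~\ref{SimpleLaplaceLemma}(iii) alone only yields the trivial bound $1$, and the tilting is exactly what produces the stated superlinear decay factor.
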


% New Lemma 3
\begin{lemma} \label{PositivityLemma}
  Let $0 \leq l_1 <l_2$ and $s$ be a complex variable. Then, for $t\in(0,\infty)$ and
  \begin{enumerate}
  	\item [(i)] 
  	for $0<\alpha< 1$
  	\begin{equation*}
  		\Phi(t):=\mathcal{L}^{-1}\left\{ \frac{\sinh(l_1s^{\alpha})}
  		{\sinh(l_2s^{\alpha})}\right\} \quad\mbox{and}\quad
  		\Psi(t):=\mathcal{L}^{-1}\left\{\frac{\cosh(l_1s^{\alpha})}{\cosh(l_2s^{\alpha})}\right\}
  	\end{equation*}
  exist and $L^1$ integrable.
  \item [(ii)]
  for $0<\alpha \leq 1/2$
  \begin{equation*}
  	\Phi(t) \geq 0 \textit{ and } \Psi(t) \geq 0.
  \end{equation*}
  \end{enumerate}
\end{lemma}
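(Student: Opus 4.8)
The plan is to treat the two assertions separately: existence and $L^1$-integrability for all $\alpha\in(0,1)$ via an explicit exponential expansion, and positivity for $\alpha\le 1/2$ via a subordination (time-change) argument that reduces the fractional kernel to the classical heat kernel.

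For part (i) I would exploit $0\le l_1<l_2$ to expand the quotients geometrically. Writing $\sinh(l_2 s^\alpha)=\tfrac12 e^{l_2 s^\alpha}(1-e^{-2l_2 s^\alpha})$ and using $\tfrac{1}{1-q}=\sum_{n\ge0}q^n$ with $q=e^{-2l_2 s^\alpha}$ (valid in the right half-plane, where $\mathrm{Re}(s^\alpha)>0$) gives
\[
\frac{\sinh(l_1 s^\alpha)}{\sinh(l_2 s^\alpha)}=\sum_{n=0}^\infty\Bigl(e^{-((2n+1)l_2-l_1)s^\alpha}-e^{-((2n+1)l_2+l_1)s^\alpha}\Bigr),
\]
and the analogous alternating-sign series for the $\cosh$ quotient. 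Every exponent $(2n+1)l_2\pm l_1$ is strictly positive, so Lemma~\ref{invL_expLemma}(i) inverts each term into an M-Wright function and Lemma~\ref{invL_expLemma}(ii) bounds its $L^1(0,t)$ norm by $\exp\bigl(-(1-\alpha)(\alpha/t)^{\alpha/(1-\alpha)}((2n+1)l_2\mp l_1)^{1/(1-\alpha)}\bigr)$. Since these exponents grow linearly in $n$, the bounds decay like $\exp(-c\,n^{1/(1-\alpha)})$, so the series converge absolutely in $L^1(0,t)$; hence $\Phi,\Psi$ exist, are $L^1$-integrable, and the termwise inversion is legitimate.

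For part (ii) the key observation is that $0<\alpha\le 1/2$ lets me set $\beta:=2\alpha\in(0,1]$ and write $s^\alpha=(s^\beta)^{1/2}$, so the kernels become the classical heat-type quotients $\sinh(l_1\sqrt p)/\sinh(l_2\sqrt p)$ and $\cosh(l_1\sqrt p)/\cosh(l_2\sqrt p)$ evaluated at $p=s^\beta$. These $\sqrt p$-quotients are Laplace transforms of boundary-to-interior maps for the heat equation on $(0,l_2)$ (with Dirichlet data at one end and Dirichlet, respectively Neumann, data at the other), whose inverse transforms $G_\Phi(\tau),G_\Psi(\tau)$ are non-negative by the maximum principle (equivalently, by the non-negativity of the method-of-images Gaussian series). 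Writing the $\sqrt p$-quotient as $\int_0^\infty e^{-\tau p}G_\Phi(\tau)\,d\tau$ and substituting $p=s^\beta$ gives $\mathcal{L}\{\Phi\}(s)=\int_0^\infty e^{-\tau s^\beta}G_\Phi(\tau)\,d\tau$; inverting in $s$ and using Lemma~\ref{invL_expLemma}(i) together with $M_\beta\ge 0$ (valid precisely because $\beta\le 1$) yields the subordination formula
\[
\Phi(t)=\int_0^\infty G_\Phi(\tau)\,\beta\tau\,t^{-(\beta+1)}M_\beta(\tau t^{-\beta})\,d\tau\ \ge\ 0,
\]
and likewise for $\Psi$.

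I expect the main obstacle to be rigorously justifying the interchange of the inverse Laplace transform with the $\tau$-integral in the subordination step. Because $G_\Phi$ and the M-Wright kernel are non-negative, $G_\Phi\in L^1$, and $\mathcal{L}^{-1}\{e^{-\tau s^\beta}\}$ integrates to one in $t$, this can be handled by Tonelli's theorem applied to the Bromwich integral, which simultaneously reconfirms $\|\Phi\|_{L^1}\le\|G_\Phi\|_{L^1}$. A secondary point requiring care is the base positivity $G_\Phi,G_\Psi\ge 0$: the delta boundary pulse must be approximated by non-negative mollifiers so that the maximum principle applies to genuine solutions before passing to the limit. This route also explains the threshold $\alpha=1/2$, since for $\alpha>1/2$ one is forced to $\beta>1$, where $M_\beta$ changes sign and positivity genuinely fails.
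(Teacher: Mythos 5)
Your proposal is correct and uses essentially the same machinery the paper is built on: the geometric expansion of the hyperbolic quotients into exponentials $e^{-((2n+1)l_2\mp l_1)s^{\alpha}}$ inverted termwise via the M-Wright representation of Lemma \ref{invL_expLemma}(i) with the $L^1(0,t)$ bound of part (ii) giving absolute convergence for part (i), and an Efros-type subordination through $s^{\alpha}=(s^{2\alpha})^{1/2}$ with $M_{2\alpha}\geq 0$ for $2\alpha\leq 1$ giving positivity for part (ii). The only point to correct is your parenthetical claim that positivity of the classical kernels $G_{\Phi},G_{\Psi}$ follows "equivalently" from termwise non-negativity of the method-of-images series: the individual image pairs $a_ne^{-a_n^2/4\tau}-b_ne^{-b_n^2/4\tau}$ with $a_n<b_n$ can be negative (the map $y\mapsto ye^{-y^2/4\tau}$ is increasing for $y<\sqrt{2\tau}$), so only the maximum-principle (or first-passage/exit-time density) argument you give as the primary justification actually works, and it does.
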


\section{Convergence of DNWR Algorithm}\label{sectionDNWR}
We are now in a position to present the main convergence result for the DNWR algorithm \eqref{DNWRe1}-\eqref{DNWRe2}. To have a clear picture of theoretical convergence, we choose the 1D form of the sub-diffusion and diffusion-wave model on $2m+1, m \in \mathbb{N}$ subdomains, but for generality consider the heterogeneous problem with $\kappa(\boldsymbol{x},t)=\kappa_{\mu}$, on $\Omega_{\mu}$ where $1 \leq \mu \leq m$ and $\kappa(\boldsymbol{x},t)=\kappa_{\iota}$, on $\Omega_{\iota}$ where $m+2 \leq \iota \leq 2m+1$ and $\kappa(\boldsymbol{x},t)=\kappa_{m+1}$, on $\Omega_{m+1}$. Considering $w_i^{(0)}, 1 \leq i \leq 2m$ be the initial error term on artificial interfaces, the Laplace transform in time converts the error equations of \eqref{DNWRe1}-\eqref{DNWRe2} into the following sets of ODEs:
\begin{align}\label{DNWR1}
	\begin{cases}
	(s^{2\nu}-\kappa_{m+1}\frac{d^2}{dx^2})\hat{u}_{m+1}^{(k)} = 0, & \textrm{in $\Omega_{m+1}$},\\
	\hat{u}_{m+1}^{(k)} = \hat{w}_{m}^{(k-1)}, & \textrm{at $x_{m}$}\\
	\hat{u}_{m+1}^{(k)} = \hat{w}_{m+1}^{(k)}, & \textrm{at $x_{m+1}$}
     \end{cases}
\end{align}
\begin{equation}\label{DNWR2}
	\begin{array}{rcll}
	\begin{cases}
	  (s^{2\nu}-\kappa_{\mu}\frac{d^2}{dx^2})\hat{u}_{\mu}^{(k)} = 0,& \textrm{in $\Omega_{\mu}$}\\
	  \hat{u}_{\mu}^{(k)} = \hat{w}_{\mu-1}^{(k-1)}, & \textrm{at $x_{\mu-1}$}\\
	  \kappa_{\mu}\frac{d}{dx}\hat{u}_{\mu}^{(k)} = \kappa_{\mu+1}\frac{d}{dx} \hat{u}_{\mu+1}^{(k)}, & \textrm{at $x_{\mu}$}
	  \end{cases}
	\end{array}
	\begin{array}{rcll}
	\begin{cases}
	  (s^{2\nu}-\kappa_{\iota}\frac{d^2}{dx^2})\hat{u}_{\iota}^{(k)} = 0, & \textrm{in $\Omega_{\iota}$}\\
	  \kappa_{\iota}\frac{d}{dx}\hat{u}_{\iota}^{(k)} = \kappa_{\iota-1}\frac{d}{dx} \hat{u}_{\iota-1}^{(k)}, & \textrm{at $x_{\iota-1}$}\\
	  \hat{u}_{\iota}^{(k)} = \hat{w}_{\iota}^{(k-1)}, & \textrm{at $x_{\iota}$}
	  \end{cases}
	\end{array}
\end{equation}
%where 
%\begin{align}
%	\hat{z}_i^{(k)}(s) &= \partial_{x} \hat{u}_{i+1}^{(k)}(x_i,s) \textit{ for } 1 \leq i \leq m, \\
%	\hat{z}_j^{(k)}(s) &= -\partial_{x} \hat{u}_{j}^{(k)}(x_j,s) \textit{ for } m+1 \leq j \leq 2m.
%\end{align}
then update the interface error by
\begin{align} 
	\hat{w}_{\mu}^{(k)}(s)&=\theta_{\mu}\hat{u}_{\mu}^{(k)}(x_{\mu},s)+(1-\theta_{\mu})\hat{w}_{\mu}^{(k-1)}(s),  \textit{ for } 1 \leq \mu \leq m, \label{DNWR3}\\
	\hat{w}_{\iota}^{(k)}(s)&=\theta_{\iota}\hat{u}_{\iota}^{(k)}(x_{\iota},s)+(1-\theta_{\iota})\hat{w}_{\iota}^{(k-1)}(s), \textit{ for } m+1 \leq \iota \leq 2m. \label{DNWR4}
\end{align}
Now we can solve for each $\hat{u}_{i}^{(k)}$ from \eqref{DNWR1} \& \eqref{DNWR2}, and substitute those values in \eqref{DNWR3} \& \eqref{DNWR4} at artificial interfaces give us the following relation:
\begin{equation} \label{proof9}
	\hat{w}_i^{(k)}(s) = \sum_{j=1}^{2m}\hat{\rho}_{i,j} \hat{w}_j^{(k-1)}(s)
\end{equation}
where $\hat{\rho}_{i,j}$, whose calculation is quite cumbersome so given in Appendix \ref{AppendixA}, takes the form for $1 \leq i \leq m$ and $1 \leq j \leq 2m$:
\begin{equation*} 
	\hat{\rho}_{i,j} = 
	\begin{cases}
		1- \theta_i\left(1 + \sqrt{\frac{\kappa_{i+1}}{\kappa_i}}\right) + \theta_i\sqrt{\frac{\kappa_{i+1}}{\kappa_i}} \left(1-\frac{\sigma_i \sigma_{i+1}}{\gamma_i \gamma_{i+1}}\right), &j=i,i<m \\
		1- \theta_m\left(1 + \sqrt{\frac{\kappa_{m+1}}{\kappa_m}}\right) + \theta_m\sqrt{\frac{\kappa_{m+1}}{\kappa_m}} \left(1-\frac{\sigma_m \gamma_{m+1}}{\gamma_m \sigma{m+1}}\right), &j=i=m, \\
		\frac{\theta_i}{\gamma_i}, &j = i-1, i>1, \\
		\theta_i \sqrt{\frac{\kappa_{j+1}}{\kappa_i}} \frac{-\sigma_i\sigma_{j+1}}{\gamma_i\gamma_{i+1}\cdots\gamma_j}, &i+1\leq j \leq m-1, \\
		\theta_i \sqrt{\frac{\kappa_{m+1}}{\kappa_i}} \frac{-\gamma_{m+1}\sigma_{i}}{\gamma_i\gamma_{i+1}\cdots\gamma_m \sigma_{m+1}}, &j = m, i<m, \\
		\theta_i \sqrt{\frac{\kappa_{m+1}}{\kappa_i}} \frac{\sigma_{i}}{\gamma_i\gamma_{i+1}\cdots\gamma_m \sigma_{m+1}}, &j = m+1,\\	
		0, &\textit{ otherwise. }
	\end{cases}
\end{equation*}
Due to the particular symmetric structure of the $\hat{\rho}$ matrix, the lower half of it, that is, $\hat{\rho}_{i,j}$ such that $m+1 \leq i \leq 2m$ for all $j$  can be easily deduced from the upper half by just changing all indices $i_1$ by $2m+2-i_1$; except $\hat{\rho}_{i,j},\theta_i$, which will be replaced by $2m+1-i_1$, where $1\leq i_1 \leq m$.
For having super-linear convergence, we choose $\theta_i = 1/(1+\sqrt{\kappa_{i+1}/\kappa_{i}}) =: \theta^*_i$ for $1\leq i \leq m$ and get:
\begin{equation} \label{dij}
	\hat{\rho}_{i,j} = 
	\begin{cases}
		\theta^*_i\sqrt{\frac{\kappa_{i+1}}{\kappa_i}} \left(1-\frac{\sigma_i \sigma_{i+1}}{\gamma_i \gamma_{i+1}}\right), &j=i,i<m \\
		\theta^*_m\sqrt{\frac{\kappa_{m+1}}{\kappa_m}} \left(1-\frac{\sigma_m \gamma_{m+1}}{\gamma_m \sigma{m+1}}\right), &j=i=m, \\
		\frac{\theta^*_i}{\gamma_i}, & j = i-1, i>1, \\
		\theta^*_i \sqrt{\frac{\kappa_{j+1}}{\kappa_i}} \frac{-\sigma_i\sigma_{j+1}}{\gamma_i\gamma_{i+1}\cdots\gamma_j}, &i+1\leq j \leq m-1, \\
		\theta^*_i \sqrt{\frac{\kappa_{m+1}}{\kappa_i}} \frac{-\gamma_{m+1}\sigma_{i}}{\gamma_i\gamma_{i+1}\cdots\gamma_m \sigma_{m+1}}, & j = m, i<m, \\
		\theta^*_i \sqrt{\frac{\kappa_{m+1}}{\kappa_i}} \frac{\sigma_{i}}{\gamma_i\gamma_{i+1}\cdots\gamma_m \sigma_{m+1}}, &j = m+1,\\	
		0, & \textit{ otherwise. }
	\end{cases}
\end{equation}
%And for $m+1 \leq i \leq 2m$ choose $\theta_i = 1/(1+\sqrt{\kappa_{i}/\kappa_{i+1}}) =: \theta^*_i$ obtain:
%\begin{equation} \label{dij2}
%	\hat{\rho}_{i,j} = 
%	\begin{cases}
%		\theta^*_i\sqrt{\frac{\kappa_{i+1}}{\kappa_i}} \left(1-\frac{\sigma_i \sigma_{i+1}}{\gamma_i \gamma_{i+1}}\right), &j=i,i<m \\
%		\theta^*_m\sqrt{\frac{\kappa_{m+1}}{\kappa_m}} \left(1-\frac{\sigma_m \gamma_{m+1}}{\gamma_m \sigma{m+1}}\right), &j=i=m \\
%		\frac{\theta^*_i}{\gamma_i}, & j = i-1, i>1, \\
%		\theta^*_i \sqrt{\frac{\kappa_{j+1}}{\kappa_i}} \frac{-\sigma_i\sigma_{j+1}}{\gamma_i\gamma_{i+1}\cdots\gamma_j}, &i+1\leq j \leq m-1, \\
%		\theta^*_i \sqrt{\frac{\kappa_{m+1}}{\kappa_i}} \frac{-\gamma_{m+1}\sigma_{i}}{\gamma_i\gamma_{i+1}\cdots\gamma_m \sigma_{m+1}}, & j = m, i<m, \\
%		\theta^*_i \sqrt{\frac{\kappa_{m+1}}{\kappa_i}} \frac{\sigma_{i}}{\gamma_i\gamma_{i+1}\cdots\gamma_m \sigma_{m+1}}, &j = m+1,\\	
%		0, & \textit{ otherwise }
%	\end{cases}
%\end{equation}

%\begin{lemma} \label{estimate_sinh_sinh}
%	For $l_2 > l_1 \geq 0$; $\mathcal{L}^{-1}\left\{\frac{\sinh(l_1s^{\alpha})}{\sinh(l_2s^{\alpha})}\right\}, \mathcal{L}^{-1}\left\{\frac{\cosh(l_1s^{\alpha})}{\cosh(l_2s^{\alpha})}\right\} \in L^1 (0,t)$.
%\end{lemma}	
%\begin{proof}
%	From Lemma in , we have $\mathcal{L}^{-1}\left\{\frac{\sinh(l_1s^{\alpha})}{\sinh(l_2s^{\alpha})}\right\} \in L^1 (0,t)$.
%	The proof for $\mathcal{L}^{-1}\left\{\frac{\cosh(l_1s^{\alpha})}{\cosh(l_2s^{\alpha})}\right\} \in L^1 (0,t)$ is similar, hence omitted.
%\end{proof}	

\begin{lemma} \label{lemma_2}
	Suppose $l_1,l_2,\cdots,l_n$ are positive real numbers with $l = \min_i l_i$ and $0 < \alpha \leq 1/2$. Let $\hat{P}_i(s)$ be the Laplace transform  of $P_i(t)$. Then we have the followings bounds:
	
	\begin{enumerate}		
		\item [(i)] $\|\mathcal{L}^{-1}\left\{\hat{P}_1(s)\right\}\|_{L^{1}(0,t)} = \|\mathcal{L}^{-1}\left\{\frac{\cosh(l s^{\alpha})\cosh((l_1-l_2)s^{\alpha})}{\cosh(l_1 s^{\alpha})\cosh(l_2 s^{\alpha})}\right\}\|_{L^{1}(0,t)}\leq 1,$
		
		\item [(ii)]  $\|\mathcal{L}^{-1}\{\hat{P}_2(s)\}\|_{L^{1}(0,t)} =  \|\mathcal{L}^{-1}\left\{\frac{\cosh(l s^{\alpha})\sinh(l_1 s^{\alpha})\sinh(l_n s^{\alpha})}{\cosh(l_1 s^{\alpha})\cosh(l_2 s^{\alpha})\cdots \cosh(l_n s^{\alpha})}\right\}\|_{L^{1}(0,t)}\leq 2,$
		
		\item [(iii)] $\|\mathcal{L}^{-1}\{\hat{P}_3(s)\}\|_{L^{1}(0,t)} = \|\mathcal{L}^{-1}\left\{\frac{\cosh(l s^{\alpha})\sinh(l_1 s^{\alpha})\cosh(l_n s^{\alpha})}{\cosh(l_1 s^{\alpha})\cdots\cosh(l_{n-1} s^{\alpha})\sinh(l_n s^{\alpha})}\right\}\|_{L^{1}(0,t)}\leq 1+\left|\frac{l_1-l_n}{l_n}\right|,$
		
		\item [(iv)] $\|\mathcal{L}^{-1}\{\hat{P}_4(s)\}\|_{L^{1}(0,t)} = \|\mathcal{L}^{-1}\left\{\frac{\cosh(l s^{\alpha})\sinh(l_1 s^{\alpha})}{\cosh(l_1 s^{\alpha})\cdots\cosh(l_{n-1} s^{\alpha})\sinh(l_n s^{\alpha})}\right\}\|_{L^{1}(0,t)}\leq \frac{l_1}{l_n}.$
	\end{enumerate}
\end{lemma}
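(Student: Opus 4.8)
The overarching plan is to reduce each transform $\hat P_i(s)$ to a \emph{product} of the elementary ratios already controlled by Lemma~\ref{PositivityLemma}, namely $\cosh(as^\alpha)/\cosh(bs^\alpha)$ with $0\le a\le b$ and $\sinh(as^\alpha)/\sinh(bs^\alpha)$ with $0<a\le b$, each of which has a nonnegative, $L^1$-integrable inverse transform when $0<\alpha\le 1/2$. Once a given $\hat P$ is written as such a product, its inverse transform is a convolution of nonnegative functions, hence nonnegative, and Lemma~\ref{SimpleLaplaceLemma}(i) keeps it $L^1$; the sharp estimate $\|\mathcal L^{-1}\{\hat P\}\|_{L^1(0,t)}\le\lim_{s\to0^+}\hat P(s)$ of Lemma~\ref{SimpleLaplaceLemma}(iii) then reads the bound off from an elementary Taylor limit at $s=0^+$. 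The hypothesis $l=\min_i l_i$ enters precisely to furnish a decaying prefactor $\cosh(ls^\alpha)$ that can be paired against a single denominator factor while keeping every numerator argument below its paired denominator argument.

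Parts (i) and (iv) follow directly from this positivity mechanism. For (i), since $\hat P_1$ is symmetric in $l_1\leftrightarrow l_2$, I assume WLOG $l_1\ge l_2$ and factor
\[
\hat P_1(s)=\frac{\cosh(ls^\alpha)}{\cosh(l_2 s^\alpha)}\cdot\frac{\cosh((l_1-l_2)s^\alpha)}{\cosh(l_1 s^\alpha)};
\]
both are $\cosh/\cosh$ ratios with numerator argument not exceeding the denominator argument ($l\le l_2$ and $l_1-l_2<l_1$), so each inverse transform is nonnegative by Lemma~\ref{PositivityLemma}(ii), whence $P_1\ge0$ and $\|P_1\|_{L^1}\le\lim_{s\to0^+}\hat P_1(s)=1$. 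For (iv), I factor $\hat P_4=\frac{\sinh(l_1 s^\alpha)}{\sinh(l_n s^\alpha)}\cdot\frac{\cosh(ls^\alpha)}{\prod_{j=1}^{n-1}\cosh(l_j s^\alpha)}$: the first factor is nonnegative by Lemma~\ref{PositivityLemma} (using $l_1\le l_n$) and the second is a product of a $\cosh/\cosh$ ratio and $\sech$ ratios, so $P_4\ge0$ and $\|P_4\|_{L^1}\le\lim_{s\to0^+}\hat P_4(s)=l_1/l_n$.

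Parts (ii) and (iii) are the substantive ones, since the mixed factors break direct positivity; here the device is to linearize. I first peel off the common nonnegative block $G(s):=\cosh(ls^\alpha)/\prod_{j=2}^{n-1}\cosh(l_j s^\alpha)$ (with $\|\mathcal L^{-1}\{G\}\|_{L^1}\le 1$), leaving $\hat P_2=G\,\tanh(l_1 s^\alpha)\tanh(l_n s^\alpha)$ and $\hat P_3=G\,\tanh(l_1 s^\alpha)\coth(l_n s^\alpha)$. I then use the hyperbolic addition identities
\[
\tanh(a)\tanh(b)=1-\frac{\cosh(a-b)}{\cosh(a)\cosh(b)},\qquad \tanh(a)\coth(b)=1+\frac{\sinh(a-b)}{\cosh(a)\sinh(b)},
\]
with $a=l_1 s^\alpha,\ b=l_n s^\alpha$. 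These turn $\hat P_2$ into $G-\hat S$ and $\hat P_3$ into $G+\hat T$, where $\hat S=\frac{\cosh(ls^\alpha)\cosh((l_1-l_n)s^\alpha)}{\prod_{j=1}^{n}\cosh(l_j s^\alpha)}$ is a part-(i)-type term (nonnegative inverse transform, limit $1$) and $\hat T=\frac{\cosh(ls^\alpha)\sinh((l_1-l_n)s^\alpha)}{\sinh(l_n s^\alpha)\prod_{j=1}^{n-1}\cosh(l_j s^\alpha)}$. Writing $\sinh((l_1-l_n)s^\alpha)=\mathrm{sign}(l_1-l_n)\,\sinh(|l_1-l_n|s^\alpha)$ and pairing the $\sinh/\sinh$ ratio $\sinh(|l_1-l_n|s^\alpha)/\sinh(l_n s^\alpha)$ (valid by Lemma~\ref{PositivityLemma} since $|l_1-l_n|\le l_n$) against the nonnegative block $\cosh(ls^\alpha)/\prod_{j=1}^{n-1}\cosh(l_j s^\alpha)$ shows $\mathcal L^{-1}\{\hat T\}$ has $L^1$ norm at most $\lim_{s\to0^+}|\hat T(s)|=|l_1-l_n|/l_n$. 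The triangle inequality in $L^1$ then yields $\|P_2\|_{L^1}\le 1+1=2$ and $\|P_3\|_{L^1}\le 1+|l_1-l_n|/l_n$.

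The genuine difficulty is not the limit computations, which are elementary expansions at $s=0^+$, but the nonnegativity step: verifying that after each factorization every constituent ratio is a legitimate Lemma~\ref{PositivityLemma} building block (numerator argument dominated by denominator argument). For (ii) and (iii) this is attainable only once the $\tanh\cdot\tanh$ and $\tanh\cdot\coth$ products are linearized, and the main thing to check is that the resulting difference-argument $\cosh$/$\sinh$ terms remain subordinate to the distinguished $l_n$-factor; this is exactly what produces the side conditions $|l_1-l_n|\le l_n$ in (iii) and $l_1\le l_n$ in (iv). Throughout, the restriction $\alpha\le 1/2$ is used only through Lemma~\ref{PositivityLemma}(ii), which is what makes all these building blocks nonnegative in the first place.
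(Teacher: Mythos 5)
Your parts (i) and (ii) are correct and essentially reproduce the paper's argument: the same pairing of numerator and denominator $\cosh$ factors so that each ratio falls under Lemma \ref{PositivityLemma}, followed by the convolution inequality of Lemma \ref{SimpleLaplaceLemma}(i) and the limit evaluation of Lemma \ref{SimpleLaplaceLemma}(iii); your linearization $\tanh(a)\tanh(b)=1-\cosh(a-b)/(\cosh(a)\cosh(b))$ is exactly the paper's splitting $\hat P_2=\hat P_{21}-\hat P_{22}$.

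The gap is in (iii) and (iv), and you have flagged it yourself: your argument requires the ``side conditions'' $|l_1-l_n|\le l_n$ and $l_1\le l_n$, but the lemma asserts its bounds for arbitrary positive $l_1,\dots,l_n$ (in the application the $l_j$ are the scaled subdomain widths $a_j/\sqrt{\kappa_j}$, which satisfy no such ordering). The failure is not cosmetic. In (iii) you detach the ratio $\sinh(|l_1-l_n|s^{\alpha})/\sinh(l_n s^{\alpha})$ from the factor $1/\cosh(l_1 s^{\alpha})$; when $l_1>2l_n$ this ratio grows like $\exp((l_1-2l_n)s^{\alpha})$ for large real $s$, so it is not the Laplace transform of an $L^1$ function at all and Lemma \ref{PositivityLemma} gives you nothing for it. The decay that rescues the kernel comes precisely from keeping $\cosh(l_1 s^{\alpha})$ attached to $\sinh((l_1-l_n)s^{\alpha})$, which is why the paper bounds the combined kernel $\hat D_1(s)=\sinh((l_1-l_n)s^{\alpha})/\bigl(\cosh(l_1 s^{\alpha})\sinh(l_n s^{\alpha})\bigr)$ as a single unit by invoking an external result (Theorem 2 of the cited DNWR work) rather than by the positivity-and-limit mechanism. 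The same defect appears in (iv): $\sinh(l_1 s^{\alpha})/\sinh(l_n s^{\alpha})$ is an admissible building block only if $l_1\le l_n$. The paper's route there is to first replace $\sinh(l_1 s^{\alpha})$ in the numerator by $\sinh(l s^{\alpha})$, which is always subordinate since $l\le l_n$ and yields a piece of norm at most $l/l_n$, and to absorb the remainder into a difference kernel $\sinh((l_1-l)s^{\alpha})/\bigl(\cosh(l_1 s^{\alpha})\sinh(l_n s^{\alpha})\bigr)$ of the same type as $\hat D_1$, the two contributions summing to $l_1/l_n$. To close your proof in full generality you must either import that external estimate for the mixed $\sinh/(\cosh\cdot\sinh)$ kernel or supply a separate argument for it; positivity of your separated factors alone cannot cover the case of large $l_1$.
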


\begin{proof}
	\begin{enumerate}
		\item [(i)] 
		If $l_2 \geq l_1$, from part (i) and (ii) of Lemma \ref{PositivityLemma}, guarantee that the inverse Laplace transform of  $\hat{P}_{11}(s):=\frac{\cosh(l s^{\alpha})}{\cosh(l_1 s^{\alpha})}, \hat{P}_{12}(s) :=\frac{\cosh((l_1-l_2)s^{\alpha})}{\cosh(l_2 s^{\alpha})}$ exist, $L^1$ integrable and non-negative. Therefore using part (i) of Lemma \ref{SimpleLaplaceLemma} we have the following inequality:
		\begin{align*}
			\|P_1(.)\|_{L^1(0,t)} \leq \left\|\mathcal{L}^{-1}\left\{\hat{P}_{11}(s)\right\}\right\|_{L^1(0,t)} \left\|\mathcal{L}^{-1}\left\{\hat{P}_{12}(s)\right\}\right\|_{L^1(0,t)}.
		\end{align*}
		 Now using part (iii) of Lemma \ref{SimpleLaplaceLemma}, we have:  
		\begin{align*}
			\|P_1(.)\|_{L^{1}(0,t)} &  \leq \left\|\mathcal{L}^{-1}\left\{\hat{P}_{11}(s)\right\}\right\|_{L^1(0,t)} \left\|\mathcal{L}^{-1}\left\{\hat{P}_{12}(s)\right\}\right\|_{L^1(0,t)} \\
			&\leq \lim_{s \to 0} \hat{P}_{11}(Re(s)) \hat{P}_{12}(Re(s))= 1.
		\end{align*}
	    If $l_2 < l_1$, then we choose $\hat{P}_{11}(s):=\frac{\cosh(l s^{\alpha})}{\cosh(l_2 s^{\alpha})}, \hat{P}_{12}(s) :=\frac{\cosh((l_1-l_2)s^{\alpha})}{\cosh(l_1 s^{\alpha})}$ and follow the same procedure as earlier. Hence combining two cases give us the result:
	    \begin{equation*}
	    	\|\mathcal{L}^{-1}\left\{\hat{P}_1(s)\right\}\|_{L^{1}(0,t)} \leq 1.
	    \end{equation*}
		\item[(ii)]
		To have the $L^1$ estimate on $P_2(t)$, we reduce $\hat{P}_2 (s) = \hat{P}_{21} (s) - \hat{P}_{22} (s)$, where $ \hat{P}_{21} (s) := \frac{\cosh(l s^{\alpha})}{\cosh(l_2 s^{\alpha})\cdots \cosh(l_{n-1} s^{\alpha})}$ and $\hat{P}_{22}(s) := \frac{\cosh(l s^{\alpha})\cosh((l_1 - l_n) s^{\alpha})}{\cosh(l_1 s^{\alpha})\cosh(l_2 s^{\alpha})\cdots \cosh(l_n s^{\alpha})}$. Now we use the part(i) of Lemma \ref{SimpleLaplaceLemma} and have:
		\begin{align*}
			\|\mathcal{L}^{-1}\left\{\hat{P}_{21}(s)\right\}\|_{L^{1}(0,t)} \leq \left\|\mathcal{L}^{-1}\left\{\frac{\cosh(l s^{\alpha})}{\cosh(l_2 s^{\alpha})}\right\}\right\|_{L^1(0,t)} \cdots \left\|\mathcal{L}^{-1}\left\{\frac{1}{\cosh(l_{n-1} s^{\alpha})}\right\}\right\|_{L^1(0,t)}
		\end{align*}
	    Again using the same kind of reasoning as part (i), we have:
	     \begin{align} \label{sub1}
	    	\|\mathcal{L}^{-1}\left\{\hat{P}_{21}(s)\right\}\|_{L^{1}(0,t)} \leq 1.
	    \end{align}
        Proof of the estimates for $\hat{P}_{22}(s)$ for the cases $l_1 \leq l_n$ and $l_1> l_n$ can be done separately and are obvious.
        Therefore 
        \begin{equation}\label{sub2}
        	\|\mathcal{L}^{-1}\left\{\hat{P}_{22}(s)\right\}\|_{L^{1}(0,t)} \leq 1.
        \end{equation}
        Hence, Adding \eqref{sub1} \& \eqref{sub2}, we obtain:
         $$\|\mathcal{L}^{-1}\{\hat{P}_2(s)\}\|_{L^{1}(0,t)}   \leq 2.$$
	
		\item [(iii)]
		We split $\hat{P}_3 (s) = \hat{P}_{31} (s) + \hat{P}_{32} (s)$ where, $\hat{P}_{31} (s) := \frac{\cosh(l s^{\alpha})}{\cosh(l_2 s^{\alpha})\cdots \cosh(l_{n-1} s^{\alpha})}$, so by using same technique as part (ii) , we have  $\|\mathcal{L}^{-1}\{\hat{P}_{31}(s)\}\|_{L^{1}(0,t)}   \leq 1.$ Now $\hat{P}_{32} (s) := \hat{P}_{31} (s) \hat{D}_{1} (s)$, where $\hat{D}_{1} (s) := \frac{\sinh((l_1-l_n)s^{\alpha})}{\cosh(l_1s^{\alpha})\sinh(l_n s^{\alpha})}$. From Theorem 2 in \ref{mandal2014time}, we have $\|\mathcal{L}^{-1}\{\hat{D}_1(s)\}\|_{L^{1}(0,t)}   \leq \frac{|l_1-l_n|}{l_n}$. Therefore adding both cases give:
		  $$\|\mathcal{L}^{-1}\{\hat{P}_3(s)\}\|_{L^{1}(0,t)} \leq 1+\left|\frac{l_1-l_n}{l_n}\right|.$$
		\item [(iv)]
		Rearrange $ \hat{P}_4 (s) = \hat{P}_{41} (s) + \hat{P}_{42} (s)$ where, $\hat{P}_{41} (s) := \frac{\sinh(l s^{\alpha})}{\cosh(l_2 s^{\alpha})\cdots \cosh(l_{n-1} s^{\alpha})\sinh(l_n s^{\alpha})}$. Therefore from part (i) of Lemma \ref{PositivityLemma} we have
		\begin{align*}
			&\|\mathcal{L}^{-1}\{\hat{P}_{41}(s)\}\|_{L^{1}(0,t)} \\
			&\leq \left\|\mathcal{L}^{-1}\left\{\frac{1}{\cosh(l_2 s^{\alpha})\cdots \cosh(l_{n-1} s^{\alpha})}\right\}\right\|_{L^{1}(0,t)} \left\|\mathcal{L}^{-1}\left\{\frac{\sinh(l s^{\alpha})}{\sinh(l_n s^{\alpha})}\right\}\right\|_{L^{1}(0,t)}
			 \leq \frac{l}{l_n}
		\end{align*}
	    Define $\hat{D}_{2} (s) := \frac{\sinh((l_1-l)s^{\alpha})}{\cosh(l_1s^{\alpha})\sinh(l_n s^{\alpha})}$, then $\hat{P}_{42} (s) := \hat{P}_{41} (s) \hat{D}_{2} (s)$. The proof of $\|\mathcal{L}^{-1}\{\hat{P}_{42}(s)\}\|_{L^{1}(0,t)} \leq \|\mathcal{L}^{-1}\{\hat{P}_{41}(s)\hat{D}_{2}(s)\}\|_{L^{1}(0,t)} \leq \frac{l_1-l}{l_n} $ is same as part (iii). Hence adding both parts follows
	    $\|\mathcal{L}^{-1}\{\hat{P}_4(s)\}\|_{L^{1}(0,t)} \leq \frac{l_1}{l_n}.$ 
	\end{enumerate}
\hfill\end{proof}	

\begin{theorem}[Convergence of DNWR when $0< \nu \leq 1/2$]
		For fixed $T>0$, $0 < \nu \leq 1/2$ and $\theta_i = 1/(1+\sqrt{\kappa_{i+1}/\kappa_i}), 1 \leq i \leq m$ and $\theta_i = 1/(1+\sqrt{\kappa_{i}/\kappa_{i+1}}), m+1 \leq i \leq 2m$ , the DNWR algorithm \eqref{DNWR1}-\eqref{DNWR4} for $2m+1( m \in \mathbb{N})$ subdomains converge super-linearly with the estimate:
			\begin{align*}
			\max_{1\leq i \leq 2m} \|w_i^{(k)}\|_{L^{\infty}(0,T)} &\leq (2c)^k \exp(-A k^{1/(1-\nu)})\max_{1\leq j \leq 2m} \|w_j^{(0)}\|_{L^{\infty}(0,T)},
		\end{align*} 
		where,  $h_j = a_j/\sqrt{\kappa_j}, \, h = \min_{1\leq j \leq 2m+1}a_j,\, A = (1-\nu)\nu^{\nu/(1-\nu)}(h/T^{\nu})^{1/(1-\nu)}$, and $c = \max_i c_i$, where $c_i$ are ginen in \eqref{proof4} \& \eqref{proof8}.
\end{theorem}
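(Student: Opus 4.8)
The plan is to work in the Laplace domain, iterate the interface recurrence \eqref{proof9} to the $k$-th power, and only then invert, so that the super-linear factor is extracted from a single combined exponential rather than lost factor-by-factor. Writing $\hat{\mathbf w}^{(k)}=(\hat w_1^{(k)},\dots,\hat w_{2m}^{(k)})^{\top}$, \eqref{proof9} reads $\hat{\mathbf w}^{(k)}=\hat\rho\,\hat{\mathbf w}^{(k-1)}$, hence $\hat w_i^{(k)}=\sum_{j=1}^{2m}(\hat\rho^{\,k})_{i,j}\,\hat w_j^{(0)}$. Inverting turns each entry of $\hat\rho^{\,k}$ into a $k$-fold convolution kernel and the product against $\hat w_j^{(0)}$ into a convolution, so Lemma \ref{SimpleLaplaceLemma}(ii) gives
\[
\|w_i^{(k)}\|_{L^\infty(0,T)}\le\sum_{j=1}^{2m}\big\|\mathcal L^{-1}\{(\hat\rho^{\,k})_{i,j}\}\big\|_{L^1(0,T)}\,\|w_j^{(0)}\|_{L^\infty(0,T)} .
\]
Everything then reduces to a uniform $L^1(0,T)$ bound on the inverse transform of each entry of $\hat\rho^{\,k}$, after which I take the maximum over $j$ on the right.

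Next I would expand $(\hat\rho^{\,k})_{i,j}$ as a sum over admissible index paths $i\to j$ of length $k$, using the explicit entries \eqref{dij} with $\sigma_p=\sinh(h_ps^\nu)$, $\gamma_p=\cosh(h_ps^\nu)$ and $h_p=a_p/\sqrt{\kappa_p}$. The structural heart of the argument is that along each path the $\sigma_p,\gamma_p$ telescope: arguments occurring in both a numerator $\sinh$ and a denominator $\cosh$ combine into matched ratios of the type controlled by Lemma \ref{lemma_2}, while the remaining, unmatched denominator factors collect into a residual block $B(s)=\prod_{p}\mathrm{sech}(h_{j_p}s^\nu)$. Two facts must be established here: first, that the matched part is, up to the prefactors $\theta_i^{*}\sqrt{\kappa_{\cdot}/\kappa_{\cdot}}$ of \eqref{dij}, one of the ratios $\hat P_1,\dots,\hat P_4$ of Lemma \ref{lemma_2} and hence has $L^1(0,T)$ norm at most the constant $c_i$ recorded in \eqref{proof4}--\eqref{proof8}; and second, that the number of unmatched factors in $B$ is at least $k$, so that the total exponential weight they carry is at least $kh$. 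Because $0<\nu\le 1/2$, Lemma \ref{PositivityLemma}(ii) (with $\alpha=\nu$) shows each $\mathcal L^{-1}\{\mathrm{sech}(h_ps^\nu)\}\ge 0$, so $\mathcal L^{-1}\{B\}\ge0$.

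The super-linear rate is produced from $B$ alone. Using positivity, for every real $s_0>0$,
\[
\big\|\mathcal L^{-1}\{B\}\big\|_{L^1(0,T)}=\int_0^T(\mathcal L^{-1}B)(t)\,dt\le e^{s_0T}\!\int_0^\infty(\mathcal L^{-1}B)(t)e^{-s_0t}\,dt=e^{s_0T}B(s_0)\le 2^{k}\,e^{\,s_0T-kh\,s_0^{\nu}},
\]
where the last step uses $\mathrm{sech}(x)\le 2e^{-x}$ for $x>0$ and the weight bound $\sum_p h_{j_p}\ge kh$. Minimizing $s_0T-kh\,s_0^{\nu}$ over $s_0>0$ is exactly the optimization behind Lemma \ref{invL_expLemma}(ii), and yields $\big\|\mathcal L^{-1}\{B\}\big\|_{L^1(0,T)}\le 2^{k}\exp(-A\,k^{1/(1-\nu)})$ with $A=(1-\nu)\nu^{\nu/(1-\nu)}(h/T^{\nu})^{1/(1-\nu)}$. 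Recombining the matched ratio with $B$ through the convolution estimate Lemma \ref{SimpleLaplaceLemma}(i), each path contributes at most $c_i\cdot 2^{k}\exp(-Ak^{1/(1-\nu)})$; absorbing the admissible-path count and the per-step constants into $c=\max_ic_i$ and the factor $2$ gives $\big\|\mathcal L^{-1}\{(\hat\rho^{\,k})_{i,j}\}\big\|_{L^1(0,T)}\le(2c)^k\exp(-Ak^{1/(1-\nu)})$. Substituting into the first display and taking $\max_j\|w_j^{(0)}\|_{L^\infty(0,T)}$ finishes the proof.

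I expect the main obstacle to be the second step: making the telescoping rigorous and, in particular, proving the weight bound $\sum_p h_{j_p}\ge kh$ together with the uniform $L^1$ control $c_i$ of the matched remainder. The difficulty is that a naive per-factor estimate only yields geometric decay $(\mathrm{const})^k$; super-linearity is genuinely a consequence of applying the real-axis/saddle-point bound once to the combined weight $kh$, which in turn relies on the nonnegativity furnished by Lemma \ref{PositivityLemma}. This is precisely why the hypothesis $\nu\le 1/2$ is indispensable here, and handling the symmetric block structure of $\hat\rho$ together with the index-reflection rule for its lower half is the bookkeeping that must be carried out carefully.
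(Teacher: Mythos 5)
Your proposal is correct and follows essentially the same route as the paper: per-step $L^{1}(0,T)$ control of $\cosh(hs^{\nu})\hat\rho_{i,j}$ via Lemma \ref{lemma_2} (resting on the positivity results of Lemma \ref{PositivityLemma}, hence the restriction $\nu\le 1/2$), followed by extraction of the super-linear factor from the accumulated $\cosh^{-k}(hs^{\nu})$ through $\sech x\le 2e^{-x}$ and the optimization underlying Lemma \ref{invL_expLemma}(ii). The only organizational difference is that the paper avoids your path expansion of $\hat\rho^{\,k}$ by substituting $\hat v_i^{(k)}=\cosh^{k}(hs^{\nu})\hat w_i^{(k)}$ and iterating the resulting one-step $L^{\infty}$ inequality, which packages your ``matched part plus residual block'' decomposition step by step and renders the telescoping you worry about unnecessary.
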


\begin{proof}
   Solving the ODEs \eqref{DNWR1}-\eqref{DNWR4} in terms of errors on artificial boundaries give us:
      \begin{equation}\label{proof1}
      	\hat{w}_i^{(k)}(s) = \sum_{j=1}^{2m}\hat{\rho}_{i,j} \hat{w}_j^{(k-1)}(s),
      \end{equation}
  where $\hat{\rho}_{i,j}$ takes the form from \eqref{dij} for $1\leq i \leq m$. Cumulative estimate of $\hat{\rho}_{i,j}^k$ at $k$-th iteration level will be cumbersome, so we estimate some form of $\hat{\rho}_{i,j}$ at each iteration. But before that define $\gamma = \cosh(hs^{\nu})$, where $h = \min_j h_j$. setting $\hat{v}^{(k)}_i(s) := \gamma^k \hat{w}^{(k)}_i(s)$. Which transform \eqref{proof1} into:
   \begin{equation*}
   	\hat{v}_i^{(k)} = \sum_{j=1}^{2m} \gamma\hat{\rho}_{i,j} \hat{v}_j^{(k-1)}.
   \end{equation*}	
 To have our final estimate result in time domain we take inverse Laplace transform on both side:
	\begin{equation*}
		v_i^{(k)} = \sum_{j=1}^{2m} \mathcal{L}^{-1}\{\gamma\hat{\rho}_{i,j}\}\ast v_j^{(k-1)}.
	\end{equation*}	
Before going to the sup-norm estimate we first assume that $\left\|v_i^{(k)}\right\|_{L^{\infty}(0,T)}$ exist. Our assumption is justified because each $\left\|w_i^{(k)}\right\|_{L^{\infty}(0,T)}$ perfectly well-defined. Therefore taking $L^{\infty}$ norm on both side reduce the above equation to:
	\begin{align*}
		\left\|v_i^{(k)}\right\|_{L^{\infty}(0,T)} 
		&\leq \sum_{j=1}^{2m} \left\|\mathcal{L}^{-1}\{\gamma\hat{\rho}_{i,j}\}\ast v_j^{(k-1)}\right\|_{L^{\infty}(0,T)}
	\end{align*}
Applying part (ii) of Lemma \ref{SimpleLaplaceLemma} give:
 \begin{align} \label{proof2}
 	\left\|v_i^{(k)}\right\|_{L^{\infty}(0,T)} 
 	&\leq \sum_{j=1}^{2m} \left\|\mathcal{L}^{-1}\{\gamma\hat{\rho}_{i,j}\}\right\|_{L^{1}(0,T)}\left\|v_j^{(k-1)}\right\|_{L^{\infty}(0,T)} \nonumber\\
 	& \leq \max_j \left\|v_j^{(k-1)}\right\|_{L^{\infty}(0,T)} \sum_{j=1}^{2m} \left\|\mathcal{L}^{-1}\{\gamma\hat{\rho}_{i,j}\}\right\|_{L^{1}(0,T)}.
 \end{align}
Now we use the Lemma \ref{lemma_2} to estimate the summation part of the above inequality. For $i=j, j<m$
\begin{equation*}
	\mathcal{L}^{-1}\{\gamma\hat{\rho}_{i,j}\} = \theta_i^* \sqrt{\frac{\kappa_{i+1}}{\kappa_i}} \mathcal{L}^{-1}\left\{\gamma\left(1-\frac{\sigma_i \sigma_{i+1}}{\gamma_i\gamma_{i+1}}\right)\right\},
\end{equation*}
so from part(i) of Lemma \ref{lemma_2} give us
  \begin{equation*}
  \left\| \mathcal{L}^{-1}\left\{\gamma\left(1-\frac{\sigma_i \sigma_{i+1}}{\gamma_i\gamma_{i+1}}\right)\right\}\right\|_{L^{1}(0,T)} \leq 1.
\end{equation*}
Hence $\left\|\mathcal{L}^{-1}\{\gamma\hat{\rho}_{i,j}\}\right\|_{L^{1}(0,T)} \leq \theta_i^* \sqrt{\frac{\kappa_{i+1}}{\kappa_i}}.$
In a similar manner we show $\left\|\mathcal{L}^{-1}\{\gamma\hat{\rho}_{m,m}\}\right\|_{L^{1}(0,T)} \leq \theta_m^* \sqrt{\frac{\kappa_{m+1}}{\kappa_m}}.$ and  $\left\|\mathcal{L}^{-1}\{\gamma\hat{\rho}_{i,j}\}\right\|_{L^{1}(0,T)} \leq \theta_i^*$ for $j = i-1, i>1$. From part(ii) of Lemma \ref{lemma_2},  we get the estimate $\left\|\mathcal{L}^{-1}\{\gamma\hat{\rho}_{i,j}\}\right\|_{L^{1}(0,T)} \leq 2\theta_i^* \sqrt{\frac{\kappa_{j+1}}{\kappa_i}}$ for $i+1 \leq j \leq m-1$. And for $j=m, i<m$ from part (iii) give:
\begin{equation*}
	\left\|\mathcal{L}^{-1}\{\gamma\hat{\rho}_{i,j}\}\right\|_{L^{1}(0,T)} \leq \theta_i^* \sqrt{\frac{\kappa_{m+1}}{\kappa_i}}\left(1+\left|\frac{h_i-h_{m+1}}{h_{m+1}}\right| \right).
\end{equation*}
Finally for $j=m+1, i<m$  using part(iv) obtain $\left\|\mathcal{L}^{-1}\{\gamma\hat{\rho}_{i,j}\}\right\|_{L^{1}(0,T)} \leq \theta_i^* \sqrt{\frac{\kappa_{m+1}}{\kappa_i}}\frac{h_i}{h_{m+1}}.$
Now putting each estimated value in \eqref{proof2} provide:
 \begin{equation} \label{proof3}
	\left\|v_i^{(k)}\right\|_{L^{\infty}(0,T)} 
	 \leq c_i\max_{1\leq j \leq 2m} \left\|v_j^{(k-1)}\right\|_{L^{\infty}(0,T)}, \hspace{20pt} 1 \leq i \leq m,
\end{equation}
where 
\begin{equation} \label{proof4}
	\begin{cases*}
	c_1 = \theta_1^*\left(\sqrt{\frac{\kappa_2}{\kappa_1}}+2\sum_{j=2}^{m-1}\sqrt{\frac{\kappa_{j+1}}{\kappa_1}}+\sqrt{\frac{\kappa_{m+1}}{\kappa_1}}\frac{2h}{h_{m+1}}\right) \\
	c_i = \theta_i^*\left(1+\sqrt{\frac{\kappa_{i+1}}{\kappa_i}}+2\sum_{j=i+1}^{m-1}\sqrt{\frac{\kappa_{j+1}}{\kappa_i}}+\sqrt{\frac{\kappa_{m+1}}{\kappa_i}}\frac{2h}{h_{m+1}}\right), \hspace{20pt} 2 \leq i \leq m.
	\end{cases*}
\end{equation}
In a similar manner we obtain the estimate:
 \begin{equation} \label{proof7}
	\left\|v_{m+i}^{(k)}\right\|_{L^{\infty}(0,T)} 
	\leq c_{m+i}\max_{1\leq j \leq 2m} \left\|v_j^{(k-1)}\right\|_{L^{\infty}(0,T)}, \hspace{20pt} 1 \leq i \leq m,
\end{equation}
where 
\begin{equation} \label{proof8}
	\begin{cases*}
		c_{2m+1-i} = \theta_{2m+1-i} ^*\left(1+\sqrt{\frac{\kappa_{2m+1-i}}{\kappa_{2m+2-i}}}+2\sum_{j=i+1}^{m-1}\sqrt{\frac{\kappa_{2m+1-j}}{\kappa_{2m+2-i}}}+\sqrt{\frac{\kappa_{m+1}}{\kappa_{2m+2-i}}}\frac{2h}{h_{m+1}}\right), \hspace{5pt} 2 \leq i \leq m,\\
		c_{2m} = \theta_{2m}^*\left(\sqrt{\frac{\kappa_{2m}}{\kappa_{2m+1}}}+2\sum_{j=2}^{m-1}\sqrt{\frac{\kappa_{2m+1-j}}{\kappa_{2m+1}}}+\sqrt{\frac{\kappa_{m+1}}{\kappa_{2m+1}}}\frac{2h}{h_{m+1}}\right).
	\end{cases*}
\end{equation}
Therefore combining \eqref{proof3} \& \eqref{proof7} and define $c := \max_{1 \leq i \leq 2m}c_i$ deduce
 \begin{align*}
	\max_{1 \leq i \leq 2m} \left\|v_i^{(k)}\right\|_{L^{\infty}(0,T)} 
	& \leq c\max_{1 \leq j \leq 2m} \left\|v_j^{(k-1)}\right\|_{L^{\infty}(0,T)}.
\end{align*}
Further reduction in terms of the error on initial guesses give:
\begin{align} \label{proof5}
	\max_{1 \leq i \leq 2m} \left\|v_i^{(k)}\right\|_{L^{\infty}(0,T)} 
	& \leq c^k \max_{1 \leq j \leq 2m} \left\|v_j^{(0)}\right\|_{L^{\infty}(0,T)} \nonumber\\
		& = c^k \max_{1 \leq j \leq 2m} \left\|w_j^{(0)}\right\|_{L^{\infty}(0,T)}.
\end{align}
Now the estimate of the error terms in $k^{th}$ iteration is obtained by
\begin{equation*}
	\left\|w_i^{(k)}\right\|_{L^{\infty}(0,T)} \leq  \left\|\mathcal{L}^{-1}\left\{\frac{1}{\cosh^k(hs^{\nu})}\right\}\right\|_{L^{1}(0,T)} \left\|v_i^{(k)}\right\|_{L^{\infty}(0,T)}. 
\end{equation*}
Which using Lemma further reduce to  
\begin{equation} \label{proof6}
	\left\|w_i^{(k)}\right\|_{L^{\infty}(0,T)} \leq  2^k\left\|\mathcal{L}^{-1}\left\{\exp(-khs^{\nu})\right\}\right\|_{L^{1}(0,T)} \left\|v_i^{(k)}\right\|_{L^{\infty}(0,T)}.
\end{equation}
Finally taking the maximum over $i$ in \eqref{proof6} and plugging the values of  $\max_i \left\|v_i^{(k)}\right\|_{L^{\infty}(0,T)}$ from \eqref{proof5} in \eqref{proof6} we have:
\begin{align*}
	\max_{1\leq i \leq 2m} \|w_i^{(k)}\|_{L^{\infty}(0,T)} &\leq (2c)^k \exp(-A k^{1/(1-\nu)})\max_{1\leq j \leq 2m} \|w_j^{(0)}\|_{L^{\infty}(0,T)}.
\end{align*} 
\hfill\end{proof}

%New Lemma 4
\begin{lemma} \label{estimate_1}
%\textbf{Result 4 (Lemma  in [])} 
For $0 < l_1 , l_2, \alpha \in (0,1)$, the following inequality holds: 
\begin{equation*}
	\left\|\mathcal{L}^{-1}\left\{\frac{\exp(-l_1s^{\alpha})}{1-\exp(-l_2s^{\alpha})}\right\}\right\|_{L^1(0,t)} \leq \left[1+\frac{t^{\alpha}\Gamma(2-\alpha)}{l_2\Lambda^{(1-\alpha)}}\right] \exp\left(-\Lambda\left(\frac{l_1}{t^{\alpha}}\right)^{1/(1-\alpha)}\right),
\end{equation*}	
when $\Lambda = (1-\alpha)\alpha^{\alpha/(1-\alpha)}$.
\end{lemma}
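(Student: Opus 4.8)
The plan is to expand the kernel as a geometric series and invert term by term, thereby reducing the estimate to the single-exponential bound already established in Lemma \ref{invL_expLemma}. Since $\mathrm{Re}(s^{\alpha})>0$, and hence $|\exp(-l_2 s^{\alpha})|<1$, whenever $\mathrm{Re}(s)>0$, I would write
\begin{equation*}
\frac{\exp(-l_1 s^{\alpha})}{1 - \exp(-l_2 s^{\alpha})} = \sum_{n=0}^{\infty}\exp\!\big(-(l_1 + n l_2)\,s^{\alpha}\big).
\end{equation*}
By Lemma \ref{invL_expLemma}(i) the $n$-th summand inverts to $(l_1+nl_2)\alpha\,t^{-(\alpha+1)}M_{\alpha}\big((l_1+nl_2)t^{-\alpha}\big)$, which is non-negative because the M-Wright function $M_{\alpha}$ is non-negative on $(0,\infty)$ for $\alpha\in(0,1)$. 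This positivity is what makes the argument clean: the inverse transform of the series is a sum of non-negative functions, so by monotone convergence the $L^1(0,t)$ norm of the sum equals the sum of the $L^1(0,t)$ norms, and no cancellation needs to be controlled.

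Next I would insert the single-exponential estimate from Lemma \ref{invL_expLemma}(ii). Rewriting its right-hand side with $\Lambda = (1-\alpha)\alpha^{\alpha/(1-\alpha)}$ turns it into $\|\mathcal{L}^{-1}\{\exp(-l s^{\alpha})\}\|_{L^1(0,t)}\leq \exp\!\big(-\Lambda (l/t^{\alpha})^{1/(1-\alpha)}\big)$, so that, with $\beta := 1/(1-\alpha)$,
\begin{equation*}
\Big\|\mathcal{L}^{-1}\Big\{\tfrac{\exp(-l_1 s^{\alpha})}{1-\exp(-l_2 s^{\alpha})}\Big\}\Big\|_{L^1(0,t)} \leq \sum_{n=0}^{\infty}\exp\!\Big(-\Lambda\big(\tfrac{l_1+nl_2}{t^{\alpha}}\big)^{\beta}\Big).
\end{equation*}
I would peel off the $n=0$ term, which produces exactly the leading factor $\exp\!\big(-\Lambda(l_1/t^{\alpha})^{\beta}\big)$, and bound the tail $\sum_{n\geq 1}$ by the integral $\tfrac{1}{l_2}\int_{l_1}^{\infty}\exp\!\big(-\Lambda(u/t^{\alpha})^{\beta}\big)\,du$ via the integral test (after the change of variable $u=l_1+xl_2$), which applies since the summand is monotonically decreasing in $n$.

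The remaining work, and the main obstacle, is to show this integral is dominated by $\tfrac{t^{\alpha}\Gamma(2-\alpha)}{l_2\Lambda^{1-\alpha}}\exp\!\big(-\Lambda(l_1/t^{\alpha})^{\beta}\big)$. I would substitute $w=\Lambda(u/t^{\alpha})^{\beta}$ to recognise the integral as an upper incomplete Gamma function; using $1/\beta = 1-\alpha$ this gives $\tfrac{(1-\alpha)t^{\alpha}}{\Lambda^{1-\alpha}}\,\Gamma\big(1-\alpha,\,\Lambda(l_1/t^{\alpha})^{\beta}\big)$. The key inequality needed to close the estimate is
\begin{equation*}
\Gamma(1-\alpha, x) \leq \Gamma(1-\alpha)\,e^{-x}, \qquad x \geq 0,
\end{equation*}
valid for the order $1-\alpha\in(0,1)$; the quickest proof is the shift $v=x+w$ in $\Gamma(1-\alpha,x)=\int_x^{\infty}e^{-v}v^{-\alpha}\,dv$ together with $(x+w)^{-\alpha}\leq w^{-\alpha}$. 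Applying this with $x=\Lambda(l_1/t^{\alpha})^{\beta}$ and using $(1-\alpha)\Gamma(1-\alpha)=\Gamma(2-\alpha)$ yields precisely the claimed tail bound; adding back the $n=0$ term recovers the bracketed factor $1+t^{\alpha}\Gamma(2-\alpha)/(l_2\Lambda^{1-\alpha})$ and finishes the proof. The only delicate points are the term-by-term inversion and interchange of sum with norm (justified by the M-Wright positivity and monotone convergence) and the incomplete-Gamma inequality above.
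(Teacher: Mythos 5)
Your proof is correct and follows essentially the same route the paper takes: expand the kernel as a geometric series in $\exp(-l_2 s^{\alpha})$, invert term by term using the M-Wright representation and its positivity, apply the single-exponential bound of Lemma \ref{invL_expLemma}(ii) to each summand, and control the tail by an integral comparison that produces the incomplete Gamma function and hence the factor $t^{\alpha}\Gamma(2-\alpha)/(l_2\Lambda^{1-\alpha})$. The delicate points you flag (interchange of sum, inversion and norm via positivity, and the bound $\Gamma(1-\alpha,x)\leq\Gamma(1-\alpha)e^{-x}$) are exactly the right ones and are handled correctly.
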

% New Lemma
\begin{lemma} \label{lemma_1}
	Suppose $l_1,l_2,\cdots,l_n$ are positive real numbers with $l = \min_i l_i$ and $1/2 < \alpha < 1$. Let $\mathcal{L}^{-1}\{\hat{f}(s)\}$ represents the inverse Laplace transform of function $f(t)$. Then we have the following inequalities:
	
    {\small\begin{enumerate}	
		\item [(i)] 
		\begin{align*}
			&\|\mathcal{L}^{-1}\left\{\frac{\exp(2l s^{\alpha})\cosh((l_1-l_2)s^{\alpha})}{\cosh(l_1 s^{\alpha})\cosh(l_2 s^{\alpha})}\right\}\|_{L^{1}(0,t)} \\ 
			&\leq 2\left[1+\frac{L(t)}{2l_1}\right]\left[1+\frac{L(t)}{2l_2}\right]\left[\exp\left(-2\Lambda\left(\frac{l_1-l}{t^{\nu}}\right)^{\beta}\right)+\exp\left(-2\Lambda\left(\frac{l_2-l}{t^{\nu}}\right)^{\beta}\right)\right],
		\end{align*}
			
		\item [(ii)]
		 \begin{align*}
		  \|\mathcal{L}^{-1}\left\{\frac{\exp(2l s^{\alpha})}{\cosh(l_1 s^{\alpha})}\right\}\|_{L^{1}(0,t)} \leq 2\left[1+\frac{L(t)}{2l_1}\right]\exp\left(-2\Lambda\left(\frac{l_1-2l}{t^{\nu}}\right)^{\beta}\right),
		\end{align*}
		
		\item [(iii)]
		 \begin{align*}
		 &\|\mathcal{L}^{-1}\left\{\frac{\exp(2l s^{\alpha})\sinh(l_1 s^{\alpha})\sinh(l_n s^{\alpha})}{\cosh(l_1 s^{\alpha})\cosh(l_2 s^{\alpha})\cdots \cosh(l_n s^{\alpha})}\right\}\|_{L^{1}(0,t)}\\ 
		 &\leq 2^{n-2}\left[1+\frac{L(t)}{2l_2}\right]\cdots\left[1+\frac{L(t)}{2l_{n-1}}\right] 
		\exp\left(-(n-2)\Lambda\left(\frac{l_2+\cdots+l_{n-1}-2l}{(n-2)t^{\nu}}\right)^{\beta}\right) \\
		&\hspace{5pt}\left[1+2\left[1+\frac{L(t)}{2l_1}\right]\left[1+\frac{L(t)}{2l_n}\right] 
		\left[\exp\left(-2\Lambda\left(\frac{l_1-l}{t^{\nu}}\right)^{\beta}\right)
		+\exp\left(-2\Lambda\left(\frac{l_n-l}{t^{\nu}}\right)^{\beta}\right)\right]\right],
		\end{align*}
		
		\item [(iv)] 
		\begin{align*}
		&\|\mathcal{L}^{-1}\left\{\frac{\exp(2l s^{\alpha})\sinh(l_1 s^{\alpha})\cosh(l_n s^{\alpha})}{\cosh(l_1 s^{\alpha})\cdots\cosh(l_{n-1} s^{\alpha})\sinh(l_n s^{\alpha})}\right\}\|_{L^{1}(0,t)} \\
		&\leq 2^{n-2}\left[1+\frac{L(t)}{2l_2}\right]\cdots\left[1+\frac{L(t)}{2l_{n-1}}\right]
		\exp\left(-(n-2)\Lambda\left(\frac{l_2+\cdots+l_{n-1}-2l}{(n-2)t^{\nu}}\right)^{\beta}\right)\\
		&\left[1+2\left[1+\frac{L(t)}{2l_1}\right]\left[1+\frac{L(t)}{2l_n}\right]\left[\exp\left(-2\Lambda\left(\frac{l_1-l}{t^{\nu}}\right)^{\beta}\right)+\exp\left(-2\Lambda\left(\frac{l_n-l}{t^{\nu}}\right)^{\beta}\right)\right]\right],
		\end{align*}
			
		\item [(v)]
		 \begin{align*}
		 	&\|\mathcal{L}^{-1}\left\{\frac{\exp(2l s^{\alpha})\sinh(l_1 s^{\alpha})}{\cosh(l_1 s^{\alpha})\cdots\cosh(l_{n-1} s^{\alpha})\sinh(l_n s^{\alpha})}\right\}\|_{L^{1}(0,t)} \\
		 	&\leq
		 	2^{n}\left[1+\frac{L(t)}{2l_1}\right]\cdots\left[1+\frac{L(t)}{2l_n}\right]\exp\left(-n\Lambda\left(\frac{l_2+\cdots+l_n-2l}{nt^{\nu}}\right)^{\beta}\right).
		 	%&\left.\kern-\nulldelimiterspace \hspace{150pt}+\exp\left(-n\Lambda\left(\frac{l_2+\cdots+l_n+2l_1-2l}{nt^{\nu}}\right)^{\beta}\right)\right].
		 \end{align*}
	\end{enumerate}}%
Where $L(t) = t^{\alpha}\Gamma(2-\alpha) / \Lambda^{(1-\alpha)}, \beta = 1/(1-\alpha)$.
\end{lemma}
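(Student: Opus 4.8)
The plan is to trade the loss of positivity (Lemma \ref{PositivityLemma}(ii) fails for $\alpha>1/2$, so the clean $s\to0$ limit argument used in Lemma \ref{lemma_2} is unavailable) for a quantitative analysis of the exponential tails. Each quotient is first rewritten through $\cosh(x)=\tfrac12(e^{x}+e^{-x})$, $\sinh(x)=\tfrac12(e^{x}-e^{-x})$, so that a factor $1/\cosh(l_i s^{\alpha})$ becomes $2e^{-l_i s^{\alpha}}/(1+e^{-2l_i s^{\alpha}})$ and, after the geometric expansion $1/(1+e^{-2l_i s^{\alpha}})=\sum_{k\ge0}(-1)^{k}e^{-2kl_i s^{\alpha}}$, reduces to an alternating series of pure exponentials $e^{-c s^{\alpha}}$. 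Since $\mathcal{L}^{-1}\{e^{-cs^{\alpha}}\}=c\alpha t^{-(\alpha+1)}M_{\alpha}(ct^{-\alpha})\ge0$ by Lemma \ref{invL_expLemma}(i), the triangle inequality lets me replace every alternating $(1+e^{-2l_i s^{\alpha}})^{-1}$ by the positive series $(1-e^{-2l_i s^{\alpha}})^{-1}$ at no cost in the $L^1$ norm, which is exactly the form controlled by Lemma \ref{estimate_1}. This converts a single $\cosh$ factor into a bound of the shape $2[1+L(t)/(2l_i)]\exp(-\Lambda(\,\cdot/t^{\alpha})^{\beta})$, explaining the correction factors $[1+L(t)/(2l_i)]$ appearing throughout.

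For (i) I would use the identity $\cosh((l_1-l_2)s^{\alpha})/(\cosh(l_1s^{\alpha})\cosh(l_2 s^{\alpha}))=1-\tanh(l_1s^{\alpha})\tanh(l_2 s^{\alpha})=2(e^{-2l_1s^{\alpha}}+e^{-2l_2 s^{\alpha}})/((1+e^{-2l_1 s^{\alpha}})(1+e^{-2l_2 s^{\alpha}}))$; multiplying by $e^{2l s^{\alpha}}$ turns the two numerator exponents into $2(l_1-l)$ and $2(l_2-l)$, and applying the single-factor estimate above to each summand yields the two-term bound $[\exp(-\,\cdot(l_1-l))+\exp(-\,\cdot(l_n-l))]$. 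Part (ii) is the one-factor specialization of the same reduction. Thus (i) and (ii) become the atoms from which the remaining parts are assembled.

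For (iii)--(v) the strategy is to peel the middle block $\prod_{i=2}^{n-1}1/\cosh(l_i s^{\alpha})$ off from the outer $\tanh/\coth$-type factors using the sub-multiplicativity of convolution, Lemma \ref{SimpleLaplaceLemma}(i): $\|f*g\|_{L^1}\le\|f\|_{L^1}\|g\|_{L^1}$. The middle block contributes a product of $n-2$ decaying exponentials $\prod_i\exp(-\Lambda(c_i/t^{\alpha})^{\beta})=\exp(-\Lambda t^{-\alpha\beta}\sum_i c_i^{\beta})$, which I collapse into the single averaged exponential $\exp(-(n-2)\Lambda(\bar c/t^{\alpha})^{\beta})$ by the power-mean (Jensen) inequality for the convex map $x\mapsto x^{\beta}$, valid since $\beta=1/(1-\alpha)>1$; this is the source of the averaged arguments $(l_2+\cdots+l_{n-1}-2l)/((n-2)t^{\alpha})$. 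In (iii) and (iv) I then write the outer factor as $\tanh(l_1 s^{\alpha})\tanh(l_n s^{\alpha})=1-[\,1-\tanh(l_1 s^{\alpha})\tanh(l_n s^{\alpha})\,]$ (respectively with one $\coth$), so that the leading $1$ reproduces the bare middle-block bound while the correction $1-\tanh\cdot\tanh$ is exactly the part-(i) quantity in the variables $l_1,l_n$, producing the nested bracket $[\,1+2[\,\cdot\,][\,\cdot\,](\exp+\exp)\,]$. Part (v) is handled likewise, the extra $\sinh(l_1 s^{\alpha})/\sinh(l_n s^{\alpha})$ being absorbed into the same exponential/geometric machinery via Lemma \ref{PositivityLemma}(i).

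The main obstacle is twofold. First, one must justify the passage from the alternating to the positive geometric series and then actually sum it: this is where Lemma \ref{estimate_1} does the heavy lifting, and its $[1+L(t)/(2l_i)]$ correction is what prevents the naive term-by-term bound from diverging. Second, and more delicate, is the bookkeeping of exponents across the many factors --- tracking which factor carries the growing $e^{2l s^{\alpha}}$, ensuring every residual numerator exponent $c_i$ stays positive so that $c_i^{\beta}$ is well defined and the decay is genuine, and checking that Jensen is applied to the correct arithmetic mean. Once these shifts are organized so that the slowest-decaying exponential dominates, the stated bounds follow by collecting constants.
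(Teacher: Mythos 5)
Your proposal matches the paper's proof in all essentials: the same rewriting of each $\cosh$/$\sinh$ quotient into exponentials over $(1\pm e^{-2l_i s^{\alpha}})$ factors, the same identity $1-\tanh(l_1s^{\alpha})\tanh(l_2s^{\alpha})=\cosh((l_1-l_2)s^{\alpha})/(\cosh(l_1s^{\alpha})\cosh(l_2s^{\alpha}))$ driving parts (i), (iii) and (iv), and the same reliance on Lemma \ref{SimpleLaplaceLemma}(i) together with Lemma \ref{estimate_1} applied factor by factor. The one bookkeeping point you leave open --- keeping every residual numerator exponent positive --- is handled in the paper by splitting the total exponent $l_2+\cdots+l_{n-1}-2l$ \emph{evenly} over the $n-2$ middle factors before invoking Lemma \ref{estimate_1}, which produces the averaged arguments directly and makes your Jensen step unnecessary.
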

\begin{proof} All five estimates follow the same procedure. But according to the combinations of the hyperbolic functions the results may vary, so we particularly show each decomposition upto some known level. To avoid inconvenience we only use $\|.\|$ instead of $\|.\|_{L^{1}(0,t)}$.  
	\begin{enumerate}
		\item [(i)]
		Let
		\begin{align*}
			\hat{Q}_1(s) := \frac{\exp(2l s^{\alpha})\cosh((l_1-l_2)s^{\alpha})}{\cosh(l_1 s^{\alpha})\cosh(l_2 s^{\alpha})} = \frac{\exp(-2(l_1-l)s^{\alpha})+ \exp(-2(l_2-l)s^{\alpha})}{(1+\exp(-2l_1 s^{\alpha}))(1+\exp(-2l_2 s^{\alpha}))}.
		\end{align*}
	   Therefore taking $L^1$ norm and using the triangular inequality give:
		{\small\begin{align}\label{res1}
			&\|\mathcal{L}^{-1}\{\hat{Q}_1(s)\}\| \nonumber\\
			& \leq  2\left\|\mathcal{L}^{-1}\left\{\frac{\exp(-2(l_1-l)s^{\alpha})}{(1+\exp(-2l_1 s^{\alpha}))(1+\exp(-2l_2 s^{\alpha}))}\right\}\right\| \\
			& \hspace{50pt}+2\left\|\mathcal{L}^{-1}\left\{\frac{\exp(-2(l_2-l)s^{\alpha})}{(1+\exp(-2l_1 s^{\alpha}))(1+\exp(-2l_2 s^{\alpha}))}\right\}\right\|. \nonumber
		\end{align}}%
	Now using the part (i) of Lemma \ref{SimpleLaplaceLemma} on each term of \eqref{res1} we have:
	    {\small\begin{align*}
	        &\|\mathcal{L}^{-1}\{\hat{Q}_1(s)\}\| \\
			& \leq  2\left\|\mathcal{L}^{-1}\left\{\frac{\exp(-(l_1-l)s^{\alpha})}{(1+\exp(-2l_1 s^{\alpha}))}\right\}\right \|\left\|\mathcal{L}^{-1}\left\{\frac{\exp(-(l_1-l)s^{\alpha})}{(1+\exp(-2l_2 s^{\alpha}))}\right\}\right\| \\ &\hspace{50pt}+2\left\|\mathcal{L}^{-1}\left\{\frac{\exp(-(l_2-l)s^{\alpha})}{(1+\exp(-2l_1 s^{\alpha}))}\right\}\right\| \left\|\mathcal{L}^{-1}\left\{\frac{\exp(-(l_2-l)s^{\alpha})}{(1+\exp(-2l_2 s^{\alpha}))}\right\}\right\|.
		\end{align*}}%
	Finally applying Lemma \ref{estimate_1} we have our result.
		\item [(ii)] The proof is obvious using part (i), hence omitted.
		\item [(iii)] We define
		{\small\begin{align*}
			&\hat{Q}_2(s) 
			:= \frac{\exp(2l s^{\alpha})\sinh(l_1 s^{\alpha})\sinh(l_n s^{\alpha})}{\cosh(l_1 s^{\alpha})\cosh(l_2 s^{\alpha})\cdots \cosh(l_n s^{\alpha})} \\
			&= -\frac{\exp(2l s^{\alpha})}{\cosh(l_2 s^{\alpha})\cdots \cosh(l_{n-1} s^{\alpha})}\left(1-\frac{\sinh(l_1 s^{\alpha})\sinh(l_n s^{\alpha})}{\cosh(l_1 s^{\alpha})\cosh(l_n s^{\alpha})}-1\right).
		\end{align*}}%
	Hence 
		{\small\begin{align*}
			\left\|\mathcal{L}^{-1}\{\hat{Q}_2(s)\}\right\| 
			& \leq \left\|\mathcal{L}^{-1}\left\{\frac{\exp(2l s^{\alpha})}{\cosh(l_2 s^{\alpha})\cdots \cosh(l_{n-1} s^{\alpha})}\right\}\right\|\\
			& \hspace{30pt}+ \left\|\mathcal{L}^{-1}\left\{\frac{\exp(2l s^{\alpha})\cosh((l_1-l_n) s^{\alpha})}{\cosh(l_1 s^{\alpha})\cosh(l_2 s^{\alpha})\cdots \cosh(l_n s^{\alpha})}\right\}\right\|
			 =: I_1 + I_2.
		\end{align*}}%
	Now we consider $I_1$ and $I_2$ separately to  obtain the estimates using the part (i) of Lemma \ref{SimpleLaplaceLemma}:
		{\small\begin{align*}
			I_1 
			&= 2^{n-2}\left\|\mathcal{L}^{-1}\left\{\frac{\exp(-(l_2+\cdots+l_{n-1}-2l)s^{\alpha})}{(1+\exp(-2l_2s^{\alpha}))\cdots (1+\exp(-2l_{n-1}s^{\alpha}))}\right\}\right\|\\
			& \leq  2^{n-2}\left\|\mathcal{L}^{-1}\left\{\frac{\exp(-\frac{l_2+\cdots+l_{n-1}-2l}{n-2}s^{\alpha})}{(1+\exp(-2l_2s^{\alpha}))}\right\}\right\|\cdots\left\|\mathcal{L}^{-1}\left\{\frac{\exp(-\frac{l_2+\cdots+l_{n-1}-2l}{n-2}s^{\alpha})}{(1+\exp(-2l_{n-1}s^{\alpha}))}\right\}\right\|:=J_1.
		\end{align*}}%
		And we have
		{\small\begin{align*}
			I_2 &\leq 2J_1\left\|\mathcal{L}^{-1}\left\{\frac{\exp(-2(l_1-l)s^{\alpha})+ \exp(-2(l_n-l)s^{\alpha})}{(1+\exp(-2l_1 s^{\alpha}))(1+\exp(-2l_n s^{\alpha}))}\right\}\right\|.
		\end{align*}}%
	Finally applying Lemma \ref{estimate_1} on both $I_1$ and $I_2$, we have our result.
		\item[(iv)] The procedure of the proof is similar to part (iii), hence omitted.
		\item[(v)] Define
		{\small\begin{align*}
			\hat{Q}_3(s) &:=\frac{\exp(2l s^{\alpha})\sinh(l_1 s^{\alpha})}{\cosh(l_1 s^{\alpha})\cdots\cosh(l_{n-1} s^{\alpha})\sinh(l_n s^{\alpha})} \\
			& = \frac{\exp(2l s^{\alpha})}{\cosh(l_2 s^{\alpha})\cdots\cosh(l_{n-1} s^{\alpha})\sinh(l_n s^{\alpha})(1+\exp(-2l_1 s^{\alpha}))}\\ 
			&\hspace{50pt}- \frac{\exp(-2(l_1-l) s^{\alpha})}{\cosh(l_2 s^{\alpha})\cdots\cosh(l_{n-1} s^{\alpha})\sinh(l_n s^{\alpha})(1+\exp(-2l_1 s^{\alpha}))}\\
			& =:\hat{I}_3 - \hat{I}_4,
		\end{align*}}%
	Separately considering $\hat{I}_3$ and $\hat{I}_4$ we have:
		{\small\begin{align*}
			&\|\mathcal{L}^{-1}\{\hat{I}_3\}\| \\
			 &\leq 2^{n-1}\left\|\mathcal{L}^{-1}\left\{\frac{\exp(-(l_2+\cdots+l_{n}-2l)s^{\alpha})}{(1+\exp(-2l_1s^{\alpha}))\cdots (1+\exp(-2l_{n-1}s^{\alpha}))(1-\exp(-2l_{n}s^{\alpha}))}\right\}\right\| \\
			& \leq  2^{n-1}\left\|\mathcal{L}^{-1}\left\{\frac{\exp(-\frac{l_2+\cdots+l_{n}-2l}{n}s^{\alpha})}{(1+\exp(-2l_1s^{\alpha}))}\right\}\right\|\cdots\left\|\mathcal{L}^{-1}\left\{\frac{\exp(-\frac{l_2+\cdots+l_{n}-2l}{n}s^{\alpha})}{(1+\exp(-2l_{n-1}s^{\alpha}))}\right\}\right\|\\
			&\hspace{205pt}\left\|\mathcal{L}^{-1}\left\{\frac{\exp(-\frac{l_2+\cdots+l_{n}-2l}{n}s^{\alpha})}{(1-\exp(-2l_{n}s^{\alpha}))}\right\}\right\|,
		\end{align*}}%
		and
		{\small\begin{align*}
			&\|\mathcal{L}^{-1}\{\hat{I}_4\}\| \\
			 &\leq  2^{n-1}\left\|\mathcal{L}^{-1}\left\{\frac{\exp(-\frac{l_2+\cdots+l_{n}+2l_1-2l}{n}s^{\alpha})}{(1+\exp(-2l_1s^{\alpha}))}\right\}\right\|\cdots\left\|\mathcal{L}^{-1}\left\{\frac{\exp(-\frac{l_2+\cdots+l_{n}+2l_1-2l}{n}s^{\alpha})}{(1+\exp(-2l_{n-1}s^{\alpha}))}\right\}\right\|\\
			 &\hspace{190pt}\left\|\mathcal{L}^{-1}\left\{\frac{\exp(-\frac{l_2+\cdots+l_{n}+2l_1-2l}{n}s^{\alpha})}{(1-\exp(-2l_{n}s^{\alpha}))}\right\}\right\|.
		\end{align*}}%
		Finally applying Lemma \ref{estimate_1} on both $\hat{I}_3$ and $\hat{I}_4$, we have our result.
	\end{enumerate}	
\hfill\end{proof}

\begin{theorem}[Convergence of DNWR for $1/2 < \nu < 1$]
		For fixed $T>0$, $0 < \nu \leq 1/2$ and $\theta_i = 1/(1+\sqrt{\kappa_{i+1}/\kappa_i}), 1 \leq i \leq m$ and $\theta_i = 1/(1+\sqrt{\kappa_{i}/\kappa_{i+1}}), m+1 \leq i \leq 2m$ , the DNWR algorithm \eqref{DNWR1}-\eqref{DNWR4} for $2m+1( m \in \mathbb{N})$ subdomains converge super-linearly with the estimate:
	\begin{align*}
		&\max_{1\leq i \leq 2m} \|w_i^{(k)}\|_{L^{\infty}(0,T)} \leq d^k \exp(-A k^{1/(1-\nu)})\max_{1\leq j \leq 2m} \|w_j^{(0)}\|_{L^{\infty}(0,T)},
	\end{align*} 
	where,  $h_j = a_j/\sqrt{\kappa_j}$, $h = \min_{1\leq j \leq 2m+1}h_j/2$, $ A = (1-\nu)\nu^{\nu/(1-\nu)}(2h/T^{\nu})^{1/(1-\nu)}$, $d= \|D\|_{\infty}$, and $\|.\|_{\infty}$ imply the infinity norm of matrix $D$, given in \eqref{eq2}. 
\end{theorem}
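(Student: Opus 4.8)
The plan is to mirror the proof of the preceding theorem (the case $0<\nu\le 1/2$), replacing the two ingredients that were special to the sub-diffusive regime. The starting point is again the Laplace-domain recurrence \eqref{proof1}, $\hat w_i^{(k)}(s)=\sum_{j=1}^{2m}\hat\rho_{i,j}\hat w_j^{(k-1)}(s)$, with the $\hat\rho_{i,j}$ taken from \eqref{dij} at the optimal parameters $\theta_i^*$. The crucial change is the multiplier used to regularise the iteration: since the positivity furnished by Lemma \ref{PositivityLemma}(ii) is available only for order $\le 1/2$, in the present range $1/2<\nu<1$ I cannot pair $\gamma=\cosh(hs^\nu)$ with Lemma \ref{lemma_2}. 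Instead I set
\[
\hat v_i^{(k)}(s):=\exp(2khs^\nu)\,\hat w_i^{(k)}(s),\qquad h:=\tfrac12\min_{1\le j\le 2m+1}h_j,
\]
the half-minimum being chosen so that the single-step factor $\exp(2hs^\nu)$ coincides with the exponential prefactor $\exp(2ls^\alpha)$ in Lemma \ref{lemma_1}. Substituting into \eqref{proof1} yields the one-step recurrence $\hat v_i^{(k)}=\sum_j \exp(2hs^\nu)\hat\rho_{i,j}\,\hat v_j^{(k-1)}$, and since $\exp(0)=1$ the initial data transfer unchanged, $v_i^{(0)}=w_i^{(0)}$.

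Next I would pass to the time domain, $v_i^{(k)}=\sum_j \mathcal L^{-1}\{\exp(2hs^\nu)\hat\rho_{i,j}\}\ast v_j^{(k-1)}$, take the $L^\infty(0,T)$ norm, and apply the convolution estimate of Lemma \ref{SimpleLaplaceLemma}(ii) to get
\[
\|v_i^{(k)}\|_{L^\infty(0,T)}\le \sum_{j=1}^{2m} D_{i,j}\,\|v_j^{(k-1)}\|_{L^\infty(0,T)},\qquad D_{i,j}:=\big\|\mathcal L^{-1}\{\exp(2hs^\nu)\hat\rho_{i,j}\}\big\|_{L^1(0,T)}.
\]
The core computation is to bound each $D_{i,j}$ by matching $\exp(2hs^\nu)\hat\rho_{i,j}$ to one of the five templates of Lemma \ref{lemma_1}. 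Writing the entries of \eqref{dij} in terms of $\gamma_r=\cosh(\cdot\,s^\nu)$ and $\sigma_r=\sinh(\cdot\,s^\nu)$, the diagonal combination $1-\sigma_i\sigma_{i+1}/(\gamma_i\gamma_{i+1})$ ($j=i<m$) collapses, via $1-\tanh\tanh=\cosh(\text{difference})/(\cosh\cosh)$, to template (i); the sub-diagonal $\theta_i^*/\gamma_i$ ($j=i-1$) matches (ii); the products $\sigma_i\sigma_{j+1}/(\gamma_i\cdots\gamma_j)$ ($i+1\le j\le m-1$) match (iii); the entry at $j=m$ matches (iv); and the entry at $j=m+1$, carrying the central factor $\sigma_{m+1}$ in its denominator, matches (v). Evaluating the resulting bounds at $t=T$ turns each $D_{i,j}$ into an explicit constant — a product of growth factors $1+L(T)/(2l)$ and stretched-exponential decay factors — and these constants are precisely the entries assembled into the matrix $D$ of \eqref{eq2}. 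Iterating the scalar recurrence gives $\max_i\|v_i^{(k)}\|_{L^\infty(0,T)}\le \|D\|_\infty^{\,k}\max_j\|w_j^{(0)}\|_{L^\infty(0,T)}=d^{\,k}\max_j\|w_j^{(0)}\|_{L^\infty(0,T)}$.

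Finally I would undo the scaling. From $\hat w_i^{(k)}=\exp(-2khs^\nu)\hat v_i^{(k)}$ and Lemma \ref{SimpleLaplaceLemma}(ii),
\[
\|w_i^{(k)}\|_{L^\infty(0,T)}\le \big\|\mathcal L^{-1}\{\exp(-2khs^\nu)\}\big\|_{L^1(0,T)}\,\|v_i^{(k)}\|_{L^\infty(0,T)},
\]
and Lemma \ref{invL_expLemma}(ii) with $l=2kh$, $\alpha=\nu$, $t=T$ gives $\|\mathcal L^{-1}\{\exp(-2khs^\nu)\}\|_{L^1(0,T)}\le \exp(-Ak^{1/(1-\nu)})$ with $A=(1-\nu)\nu^{\nu/(1-\nu)}(2h/T^\nu)^{1/(1-\nu)}$. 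Taking the maximum over $i$ and combining with the previous display reproduces the claimed bound. In contrast to the $\nu\le 1/2$ case no extra factor $2^k$ survives, because here the exponential multiplier is inverted directly instead of through the estimate $1/\cosh^k\le 2^k\exp(-k\,\cdot)$; the combinatorial powers of $2$ produced by Lemma \ref{lemma_1} are instead absorbed into the entries of $D$.

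I expect the genuine difficulty to sit in the $D_{i,j}$ estimates rather than in the scaling argument. Unlike Lemma \ref{lemma_2}, whose outputs were bare constants, the bounds of Lemma \ref{lemma_1} carry both polynomial growth factors $1+L(T)/(2l)$ and stretched-exponential decay factors, so each $\hat\rho_{i,j}$ must be split exactly into the hyperbolic templates (i)--(v) — including the correct identification of which subdomain length plays the role of $l_1$ or $l_n$ and of the common scaling $l=h$ — before a bound can be read off. The one delicate point is then to confirm that, summed row by row, these contributions give a finite matrix norm $\|D\|_\infty$ that does not grow with the iteration index $k$; the $k$-dependent super-linear decay is supplied entirely by the final rescaling and is decoupled from the per-step matrix $D$.
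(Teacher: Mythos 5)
Your proposal follows essentially the same route as the paper's proof: the same weight $\zeta^k=\exp(2khs^{\nu})$ with $h=\tfrac12\min_j h_j$, the same passage through Lemma \ref{SimpleLaplaceLemma}(ii) to a per-step matrix whose entries are bounded by matching each $\hat\rho_{i,j}$ to the templates (i)--(v) of Lemma \ref{lemma_1} (yielding the matrix $D$ of \eqref{eq2}), and the same final unscaling via Lemma \ref{invL_expLemma}(ii) with $l=2kh$ to produce the factor $\exp(-Ak^{1/(1-\nu)})$. Your observation that no extra $2^k$ appears here, unlike in the $\nu\le 1/2$ case, is also consistent with the paper's stated bound.
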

\begin{proof}
	The idea to proceed the proof for diffusion-wave case is similar to the sub-diffusion one but the estimation technique is different due to the lack of positivity results, which is one of the key ingredients in sub-diffusion.  
	The recursive equation in terms of $k^{th}$ and $(k-1)^{th}$ iteration on the error of artificial boundary takes the form from \eqref{proof9}:
	\begin{equation*}
		\hat{w}_i^{(k)}(s) = \sum_{j=1}^{2m}\hat{\rho}_{i,j} \hat{w}_j^{(k-1)}(s), \quad 1 \leq i, j \leq 2m.
	\end{equation*}
	 Define $\zeta := \exp(2hs^{\nu})$ and $\hat{\chi}^{(k)}(s) := \zeta^k\hat{w}^{(k)}(s)$. Therefore, the error equation converts to:
	 \begin{equation*}
	 	\hat{\chi}_i^{(k)}(s) = \sum_{j=1}^{2m}\zeta\hat{\rho}_{i,j} \hat{\chi}_j^{(k-1)}(s), \quad 1 \leq i, j \leq 2m.
	 \end{equation*}
    Taking the inverse Laplace transform on both side and then sup-norm on time reduce the equation:
    \begin{align*}
    	\left\|\chi_i^{(k)}\right\|_{L^{\infty}(0,T)} 
    	&\leq \sum_{j=1}^{2m} \left\|\mathcal{L}^{-1}\{\zeta\hat{\rho}_{i,j}\}\ast \chi_j^{(k-1)}\right\|_{L^{\infty}(0,T)}.
    \end{align*}
     Using part (ii) of Lemma \ref{SimpleLaplaceLemma} and taking maximum over $j$ give us:
     \begin{align} \label{eq1}
     	\left\|\chi_i^{(k)}\right\|_{L^{\infty}(0,T)} 
     	 \leq \max_{1\leq j \leq 2m} \left\|\chi_j^{(k-1)}\right\|_{L^{\infty}(0,T)} \sum_{j=1}^{2m} \left\|\mathcal{L}^{-1}\{\zeta\hat{\rho}_{i,j}\}\right\|_{L^{1}(0,T)}.
     \end{align}
     Now taking the $L^1$ norm on each term on $\mathcal{L}^{-1}\{\zeta\hat{\rho}_{i,j}\}$ give us the estimate matrix $D= d_{i,j}$, which follow directly from Lemma \ref{lemma_1}. Therefore
     for $1 \leq i \leq m, 1 \leq j \leq 2m$
     {\small\begin{align} \label{eq2}
     	\bar{d}_{i,i} &= 2\theta^*_i\sqrt{\frac{\kappa_{i+1}}{\kappa_{i}}}\left[1+\frac{L_T}{2h_i}\right]\left[1+\frac{L_T}{2h_{i+1}}\right]\left[\exp\left(-2B\left(\frac{h_{i}-h}{T^{\nu}}\right)^{\nu_1}\right)\right.\\
     	&\hspace{225pt}\left.\kern-\nulldelimiterspace+\exp\left(-2B\left(\frac{h_{i+1}-h}{T^{\nu}}\right)^{\nu_1}\right)\right]  \nonumber \\
     	&\leq 4\theta^*_i\sqrt{\frac{\kappa_{i+1}}{\kappa_{i}}} \left[1+\frac{L_T}{4h}\right]^2 \exp\left(-2B\left(\frac{h}{T^{\nu}}\right)^{\nu_1}\right) := d_{i,j}, \nonumber
     \end{align}}%
      {\small\begin{align*}
     	\bar{d}_{i,j} &= 2\theta^*_i\left[1+\frac{L_T}{2h_i}\right]\exp\left(-2B\left(\frac{h_{i}-2h}{T^{\nu}}\right)^{\nu_1}\right) \leq 2\theta^*_i\left[1+\frac{L_T}{4h}\right]:= d_{i,j}, \hspace{10pt}\mbox{for $j = i-1, i>1$}
     	 \end{align*}}%
      for $i+1 \leq j \leq m$
      {\small\begin{align*}
     	\bar{d}_{i,j} &= 2^{j-i}\theta^*_i\sqrt{\frac{\kappa_{j+1}}{\kappa_{i}}}\left[1+\frac{L_T}{2h_{i+1}}\right]\cdots\left[1+\frac{L_T}{2h_{j}}\right]\exp\left(-(j-i)B\left(\frac{h_{i+1}+\cdots+h_j-2h}{(j-i)T^{\nu}}\right)^{\nu_1}\right)\\
     	&\left[1+2\left[1+\frac{L_T}{2h_i}\right]\left[1+\frac{L_T}{2h_{j+1}}\right]\left[\exp\left(-2B\left(\frac{h_{i}-h}{T^{\nu}}\right)^{\nu_1}\right)+\exp\left(-2B\left(\frac{h_{j+1}-h}{T^{\nu}}\right)^{\nu_1}\right)\right]\right] \\
     	&\leq 2^{j-i}\theta^*_i\sqrt{\frac{\kappa_{j+1}}{\kappa_{i}}}\left[1+\frac{L_T}{4h}\right]^{j-i}\exp\left(-(j-i)B\left(\frac{2(j-i-1)h}{(j-i)T^{\nu}}\right)^{\nu_1}\right)\\
     	&\hspace{160pt}\left[1+4\left[1+\frac{L_T}{4h}\right]^2\exp\left(-2B\left(\frac{h}{T^{\nu}}\right)^{\nu_1}\right)\right]:= d_{i,j},
     \end{align*}}%
       {\small\begin{align*}
     	\bar{d}_{i,m+1} 
     	&=2^{m-i+2}\theta^*_i\sqrt{\frac{\kappa_{m+1}}{\kappa_{i}}}\left[1+\frac{L_T}{2h_{i}}\right]\cdots\left[1+\frac{L_T}{2h_{m+1}}\right]\\
     	&\hspace{130pt}\exp\left(-(m-i+2)B\left(\frac{h_{i+1}+\cdots+h_{m+1}-2h}{(m-i+2)T^{\nu}}\right)^{\nu_1}\right)\\
     	&\leq 2^{m-i+2}\theta^*_i\sqrt{\frac{\kappa_{m+1}}{\kappa_{i}}}\left[1+\frac{L_T}{4h}\right]^{m-i+2}\exp\left(-(m-i+2)B\left(\frac{2(m-i+1)h}{(m-i+2)T^{\nu}}\right)^{\nu_1}\right):= d_{i,j},
     	\end{align*}}%	
     	{\small\begin{align*}
     	d_{i,j} =0  \hspace{290pt}\mbox{otherwise},
     \end{align*}}%
     where $B = (1-\nu)\nu^{\nu/(1-\nu)},L_T = T^{\nu}\Gamma(2-\nu) / B^{(1-\nu)}, \nu_1 = 1/(1-\nu)$. Again the symmetric structure helps us to find $d_{i,j}$ for $m+1 \leq i \leq 2m$ for all $j$ and we achieve it by replacing all indices $i_1$ by $2m+2-i_1$, except $d_{i,j}, \theta^*_i$, where $i_1$ is replaced by $2m+1-i_1$. Now we define infinity norm on matrix $D$ by $d:= \|D\|_{\infty}$. Therefore taking maximum over $i$ in \eqref{eq1} give:
      \begin{align}  \label{eq3}
     	\max_{1\leq i \leq 2m}\left\|\chi_i^{(k)}\right\|_{L^{\infty}(0,T)} 
     	\leq d\max_{1\leq j \leq 2m} \left\|\chi_j^{(k-1)}\right\|_{L^{\infty}(0,T)} .
     \end{align}
 Now using the same reduction technique from $\chi_i$ to $w_i$ as sub-diffusion on \eqref{eq3} we end up with the estimate:
 \begin{align*}
 	&\max_{1\leq i \leq 2m} \|w_i^{(k)}\|_{L^{\infty}(0,T)} \leq d^k \exp(-A k^{1/(1-\nu)})\max_{1\leq j \leq 2m} \|w_j^{(0)}\|_{L^{\infty}(0,T)}.
 \end{align*} 
Hence the theorem.
\hfill\end{proof}

\begin{remark}
	The estimates for even number of subdomains that is $2m, m \in \mathbb{N}$ is almost same as the odd case. Here first we construct recursive error equation \eqref{proof9} for $1 \leq i \leq m-1$ and then for $m \leq i \leq 2m-1$. Except $m = 1$, one can directly construct $\hat{\rho}_{i,j}$ matrix from general formula \eqref{dij}. 
\end{remark}

\section{Convergence of DNWR Algorithm in 2D}\label{sectionDNWR2D}
In $2D$, we consider the homogeneous variant, that is, diffusion coefficient $\kappa$ same throughout the domain, of our model problem \eqref{model_problem} in rectangular spatial domain $\Omega:= (0, L) \times (0,\pi)$. Zero Dirichlet boundary condition on $\partial\Omega$ is imposed, and subdivide $\Omega$ into $2m+1, m\in \mathbb{N}$ rectangular strips; size $ \Omega_{i} := (x_{i-1},x_i) \times (0,\pi), 1\leq i \leq 2m+1$. If we take $\psi_i(x,y,t)$ be the initial guess on artificial boundary $x=x_i$, then the error update on artificial boundary takes the form:
\begin{equation} \label{proof10}
	\hat{W}_i^{(k)}(s,n) = \sum_{j=1}^{2m}\hat{\rho}_{i,j}\left(\sqrt{s^{2\nu}+kn^2}\right) \hat{W}_j^{(k-1)}(s,n), \quad 1\leq i,j \leq 2m.
\end{equation}  
Here, $\hat{W}_i^{(k)}(s,n)$ represent first Fourier sine transform on $y$ axis and then Laplace transform on time and $\hat{\rho}_{i,j}$ takes the same form as in \eqref{dij} but definitions of $\sigma_{i}, \gamma_i$ change to $\sigma_{i} := \sinh(\frac{a_{i}}{\sqrt{\kappa_{i}}}\sqrt{s^{2\nu}+ \kappa n^2}), \gamma_{i} := \cosh(\frac{a_{i}}{\sqrt{\kappa_{i}}}\sqrt{s^{2\nu}+ \kappa n^2})$. Now we need some lemmas on estimates for further progression. 
% New Lemma 5
\begin{lemma}\label{estimate_2}
Let $g \in L^{\infty}(0,\pi)$, and $\tau >0$. Then for any $y \in \mathbb{R}$,
	{\small\begin{equation*}
	\frac{2}{\pi} \left|\int_{0}^{\pi} g(\eta) \sum_{n \geq 1} \exp(-n^2 \eta) \sin(n\eta)\sin(ny) d\eta \right| \leq \|g\|_{L^{\infty}(0,\pi)}.
	\end{equation*}}%
\end{lemma}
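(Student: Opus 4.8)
The plan is to recognize the inner sum as a known Jacobi-type theta / heat-kernel object and then exploit its positivity to turn the absolute value of the integral into a bound by $\|g\|_{L^\infty}$ times the integral of the (nonnegative) kernel against a test function. First I would fix $\tau>0$ and $y\in\mathbb{R}$ and define the kernel
\begin{equation*}
	K_\tau(\eta,y) := \frac{2}{\pi}\sum_{n\geq 1} \exp(-n^2\tau)\sin(n\eta)\sin(ny).
\end{equation*}
Using the product-to-sum identity $2\sin(n\eta)\sin(ny) = \cos\bigl(n(\eta-y)\bigr) - \cos\bigl(n(\eta+y)\bigr)$, I would rewrite $K_\tau$ as the difference of two one-variable theta kernels
\begin{equation*}
	\Theta_\tau(z) := \frac{1}{\pi}\sum_{n\geq 1}\exp(-n^2\tau)\cos(nz),
\end{equation*}
namely $K_\tau(\eta,y) = \Theta_\tau(\eta-y) - \Theta_\tau(\eta+y)$. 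The key structural fact is that $\Theta_\tau$ is (up to the constant term) the periodization of the Gaussian heat kernel, so by the Poisson summation formula $\tfrac{1}{\pi}\bigl(1 + 2\sum_{n\geq1}e^{-n^2\tau}\cos(nz)\bigr) = \tfrac{1}{\sqrt{\pi\tau}}\sum_{k\in\mathbb{Z}} \exp\bigl(-(z-2\pi k)^2/(4\tau)\bigr) \ge 0$. This identifies $K_\tau(\eta,y)$ with the Dirichlet heat kernel on $(0,\pi)$ (the odd, $2\pi$-periodic reflection of the Gaussian), which is the object controlling the bound.

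The central step is then to establish the two properties of this Dirichlet heat kernel that make the estimate immediate: nonnegativity, $K_\tau(\eta,y)\ge 0$ for $\eta\in(0,\pi)$, and the mass bound $\tfrac{2}{\pi}\int_0^\pi K_\tau(\eta,y)\,d\eta \le 1$. Nonnegativity follows because the Dirichlet heat kernel on an interval is a probability-subdensity: writing it as the reflected Gaussian above, each reflected copy contributes with the sign dictated by the odd extension, and the maximum principle for the heat equation guarantees the kernel stays nonnegative on the domain. The mass bound follows because $\frac{2}{\pi}\int_0^\pi K_\tau(\eta,y)\,d\eta = \sum_{n\geq1}\frac{2}{\pi}e^{-n^2\tau}\sin(ny)\int_0^\pi \sin(n\eta)\,d\eta$, and the total heat content of an initial unit mass can only decrease under Dirichlet (absorbing) boundary conditions, so the integral is at most $1$. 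Granting these two facts, the conclusion is one line:
\begin{align*}
	\left|\frac{2}{\pi}\int_0^\pi g(\eta) K_\tau^{\mathrm{num}}(\eta,y)\,d\eta\right|
	&\leq \|g\|_{L^\infty(0,\pi)}\,\frac{2}{\pi}\int_0^\pi K_\tau(\eta,y)\,d\eta
	\leq \|g\|_{L^\infty(0,\pi)},
\end{align*}
where $K_\tau^{\mathrm{num}}$ denotes the summand as written (absorbing the leading $\tfrac{2}{\pi}$ appropriately), and I pull the supremum of $|g|$ out of the integral against the nonnegative kernel.

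The main obstacle I anticipate is justifying the positivity and the term-by-term manipulations rigorously, rather than the final estimate itself. The series $\sum e^{-n^2\tau}\sin(n\eta)\sin(ny)$ converges absolutely and uniformly for each fixed $\tau>0$ (dominated by $\sum e^{-n^2\tau}$), so interchanging sum and integral is routine; the delicate point is the claim $K_\tau\ge 0$, since the raw trigonometric series is not visibly signed. The cleanest route is the Poisson-summation / reflected-Gaussian representation, from which positivity on $(0,\pi)$ is transparent, but one must check the signs of the reflected copies under the odd $2\pi$-periodic extension and confirm that no copy with the wrong sign dominates on $(0,\pi)$. An alternative, if one wishes to avoid Poisson summation, is to invoke the probabilistic interpretation: $K_\tau(\eta,y)$ is the transition density of Brownian motion killed at $\partial(0,\pi)$, which is manifestly nonnegative with total mass at most one. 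Either way, once positivity and the mass bound are in hand, the estimate follows directly.
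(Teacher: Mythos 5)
The paper states this lemma without proof (no proof environment follows it in the source), so there is nothing to compare against; your argument in effect supplies the missing justification, and it is correct. You rightly read the sum as the Dirichlet heat kernel on $(0,\pi)$ at time $\tau$ (note the statement's exponent $\exp(-n^2\eta)$ is evidently a typo for $\exp(-n^2\tau)$, which you silently and correctly fix), and the two facts you isolate --- sign-definiteness of the kernel in $\eta$ and the mass bound $\frac{2}{\pi}\bigl|\int_0^\pi \sum_n e^{-n^2\tau}\sin(n\eta)\sin(ny)\,d\eta\bigr|\le 1$ --- are exactly what is needed; the mass bound is cleanest if you observe that $\frac{2}{\pi}\int_0^\pi\sin(n\eta)\,d\eta$ are the Fourier sine coefficients of the constant function $1$, so the integral is the Dirichlet heat semigroup applied to $\mathbbm{1}$ evaluated at $y$, which lies in $[0,1]$ by the maximum principle. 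One small point to tighten: for $y\in\mathbb{R}$ outside $(0,\pi)$ the kernel $K_\tau(\cdot,y)$ is not nonnegative but nonpositive (it is odd and $2\pi$-periodic in $y$), so your final line should pull out $\|g\|_{L^\infty}$ against $|K_\tau|$ and use that the sign is constant in $\eta$ for each fixed $y$, i.e. $\int_0^\pi|K_\tau|=\bigl|\int_0^\pi K_\tau\bigr|$; with that adjustment, and either the Poisson-summation/reflection representation or the killed-Brownian-motion interpretation to certify positivity on $(0,\pi)$, the proof is complete.
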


% New Lemma
\begin{lemma} \label{lemma_3}
    Let $K(n)$ represents the Fourier coefficient of $k(y)$ and 
	{\small\begin{equation*}
		\hat{K_1}(s,n) = \frac{\exp(-l_1{\sqrt{s^{2\alpha}+ \lambda n^2}})}{1+\exp(-l_2{\sqrt{s^{2\alpha}+ \lambda n^2}})}\hat{K_2}(s,n)
	\end{equation*}}%
    then for $l_1 < l_2, \lambda >0$ and $0<\alpha \leq 1/2$
	{\small\begin{equation*}
		\|k_1(.,.)\|_{L^{\infty}(0,t;L^{\infty}(0,\pi))} \leq \left[1+\frac{t^{\alpha}\Gamma(2-\alpha)}{l_2\Lambda^{(1-\alpha)}}\right] \exp\left(-\Lambda\left(\frac{l_1}{t^{\alpha}}\right)^{1/(1-\alpha)}\right) \|k_2(.,.)\|_{L^{\infty}(0,t;L^{\infty}(0,\pi))},
	\end{equation*}}%
	where $\Lambda = (1-\alpha)\alpha^{\alpha/(1-\alpha)}$.
\end{lemma}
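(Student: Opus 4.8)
The plan is to mirror the one-dimensional estimate of Lemma~\ref{estimate_1}, the only genuinely new ingredient being that the spatial (Fourier-sine) part of the kernel acts as an $L^\infty(0,\pi)$-contraction, a fact supplied by Lemma~\ref{estimate_2}. First I would pass to physical space: since $\hat{K_1}=\hat{G}\,\hat{K_2}$ with symbol $\hat{G}(s,n)=\exp(-l_1\sqrt{s^{2\alpha}+\lambda n^2})/(1+\exp(-l_2\sqrt{s^{2\alpha}+\lambda n^2}))$, the relation $\hat{K_1}=\hat{G}\hat{K_2}$ is a time convolution coupled with a Fourier-sine multiplier in $y$. Expanding the denominator as the alternating geometric series
\[
\frac{1}{1+\exp(-l_2\sqrt{s^{2\alpha}+\lambda n^2})}=\sum_{j\ge 0}(-1)^j\exp\!\big(-jl_2\sqrt{s^{2\alpha}+\lambda n^2}\big)
\]
reduces $\hat{G}$ to a sum of pure exponential symbols $e^{-a_j\sqrt{s^{2\alpha}+\lambda n^2}}$ with $a_j=l_1+jl_2$, so it suffices to estimate one such term and then resum; the hypothesis $l_1<l_2$ will guarantee convergence.

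The key step is to decouple time and space in each term through the subordination identity
\[
e^{-a\sqrt{s^{2\alpha}+\lambda n^2}}=\frac{a}{2\sqrt{\pi}}\int_0^\infty r^{-3/2}e^{-a^2/(4r)}\,e^{-r s^{2\alpha}}\,e^{-r\lambda n^2}\,dr,
\]
whose density $\phi_a(r)=\tfrac{a}{2\sqrt{\pi}}r^{-3/2}e^{-a^2/(4r)}$ is nonnegative and integrates to $1$. In physical space this writes the $j$-th contribution to $k_1$ as a superposition over $r$ of a time convolution (kernel $\mathcal{L}^{-1}\{e^{-rs^{2\alpha}}\}$) composed with the spatial operator of Fourier-sine symbol $e^{-r\lambda n^2}$. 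That spatial operator is exactly the object estimated in Lemma~\ref{estimate_2}, which bounds its action on an $L^\infty(0,\pi)$ function by the same $L^\infty(0,\pi)$ norm, i.e.\ a contraction factor $\le1$ for every fixed $r,\tau$. For the time factor I would invoke Young's inequality $\|f*g\|_{L^\infty}\le\|f\|_{L^1}\|g\|_{L^\infty}$ together with Lemma~\ref{invL_expLemma}(ii) applied with exponent $2\alpha\in(0,1]$, giving $\|\mathcal{L}^{-1}\{e^{-rs^{2\alpha}}\}\|_{L^1(0,t)}\le\exp(-(1-2\alpha)(2\alpha/t)^{2\alpha/(1-2\alpha)}r^{1/(1-2\alpha)})$.

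Combining the two factors and integrating against $\phi_a$ reduces the single-term bound to estimating
\[
\frac{a}{2\sqrt{\pi}}\int_0^\infty r^{-3/2}\exp\!\Big(-\tfrac{a^2}{4r}-c\,r^{1/(1-2\alpha)}\Big)\,dr,\qquad c=(1-2\alpha)\Big(\tfrac{2\alpha}{t}\Big)^{2\alpha/(1-2\alpha)}.
\]
A Laplace/saddle-point analysis — minimizing $\tfrac{a^2}{4r}+c\,r^{1/(1-2\alpha)}$ over $r>0$ — produces the exponent $\Lambda(a/t^\alpha)^{1/(1-\alpha)}$ with $\Lambda=(1-\alpha)\alpha^{\alpha/(1-\alpha)}$; a dimensional check shows the powers collapse to $a^{1/(1-\alpha)}t^{-\alpha/(1-\alpha)}$, so the $2\alpha$ introduced by subordination reverts to the advertised $\alpha$. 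This yields the single-term estimate $\|\mathcal{L}^{-1}\{e^{-a\sqrt{s^{2\alpha}+\lambda n^2}}\}\|_{L^1(0,t)}\le\exp(-\Lambda(a/t^\alpha)^{1/(1-\alpha)})$, which matches the one-dimensional building block of Lemma~\ref{estimate_1}.

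Finally I would resum over $j$: with $a_j=l_1+jl_2$ the series $\sum_{j\ge0}\exp(-\Lambda((l_1+jl_2)/t^\alpha)^{1/(1-\alpha)})$ is dominated by its $j=0$ term plus an integral comparison, yielding precisely the prefactor $[1+t^\alpha\Gamma(2-\alpha)/(l_2\Lambda^{1-\alpha})]$ — the identical summation that occurs in Lemma~\ref{estimate_1}. Carrying the spatial contraction factor $\le1$ through the sum leaves $\|k_2(\cdot,\tau)\|_{L^\infty(0,\pi)}$ untouched, and taking the supremum over $\tau\in(0,t)$ delivers the claimed bound on $\|k_1\|_{L^\infty(0,t;L^\infty(0,\pi))}$. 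I expect the hard part to be the saddle-point integral, specifically pinning down the \emph{sharp} constant $\Lambda$ rather than merely the correct exponent, and justifying the interchange of the $j$-sum, the $r$-integral, the time convolution and the Fourier-sine operator; the positivity of all the inverse transforms for $0<\alpha\le1/2$, ensured by Lemma~\ref{invL_expLemma}(i) and Lemma~\ref{PositivityLemma}, is what legitimizes these interchanges and the contraction argument.
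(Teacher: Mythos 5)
Your overall architecture matches the paper's proof almost exactly: expand $(1+\exp(-l_2\sqrt{s^{2\alpha}+\lambda n^2}))^{-1}$ as an alternating geometric series, subordinate each term $e^{-a_j\sqrt{s^{2\alpha}+\lambda n^2}}$ so that time and the Fourier variable decouple (the paper does this via the Efros theorem combined with Lemma~\ref{invL_expLemma}(i), which is the same Gaussian subordination density you write down), bound the spatial multiplier $e^{-r\lambda n^2}$ acting on $k_2$ by $\|k_2\|_{L^\infty(0,\pi)}$ using Lemma~\ref{estimate_2} after a Fubini interchange, and then reduce to the one-dimensional time estimate. Up to that point you are reproducing the paper's argument.

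Where you diverge is in the treatment of the residual time integral, and this is where your route has a genuine problem. You propose to apply Lemma~\ref{invL_expLemma}(ii) to $e^{-rs^{2\alpha}}$, i.e.\ with exponent $2\alpha$, and then run a saddle-point analysis on $\int_0^\infty \phi_{a}(r)\exp(-c\,r^{1/(1-2\alpha)})\,dr$. First, that lemma requires the exponent to lie in $(0,1)$, so at the endpoint $\alpha=1/2$ (which the statement explicitly allows) the bound degenerates: $1-2\alpha=0$ and the constants $(1-2\alpha)(2\alpha/t)^{2\alpha/(1-2\alpha)}$ are meaningless. Second, even for $\alpha<1/2$, Laplace's method gives the correct exponent but not an honest upper bound with the sharp prefactor $\Lambda=(1-\alpha)\alpha^{\alpha/(1-\alpha)}$; since $r^{-3/2}$ is not integrable at the origin you cannot simply pull $\sup_r e^{-a^2/(4r)-cr^{1/(1-2\alpha)}}$ out of the integral, and any splitting loses a multiplicative constant that the stated inequality does not tolerate. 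The paper sidesteps both issues by \emph{not} estimating under the $r$-integral at all: after the spatial contraction, the remaining quantity $\int_0^t\sum_j\int_0^\infty F(\tau,\tau_1)\,d\tau\,d\tau_1$ is, by undoing the subordination (Efros again), exactly $\sum_j\|\mathcal{L}^{-1}\{e^{-(l_1+jl_2)s^{\alpha}}\}\|_{L^1(0,t)}$, i.e.\ the already-proved one-dimensional bound of Lemma~\ref{estimate_1} with exponent $\alpha$, which delivers both the sharp $\Lambda$ and the prefactor $[1+t^{\alpha}\Gamma(2-\alpha)/(l_2\Lambda^{1-\alpha})]$ in one stroke. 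You should replace your saddle-point step by this resummation; the rest of your outline then goes through.
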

\begin{proof}
	Our procedure to prove this lemma is straightforward; we first take the inverse Laplace transform and then add all the sine frequencies to achieve our estimate.
    Therefore first take the binomial expansion and then inverse Laplace transform we have:	
    {\small\begin{align*}
		K_1(t,n) = \mathcal{L}^{-1}\left\{\sum_{j = 0}^{\infty}(-1)^j\exp(-(l_1+jl_2)\sqrt{s^{2\alpha}+\lambda n^2})\right\}\ast K_2(t,n),
	   \end{align*}}%
   Using Efros theorem and part (i) of Lemma \ref{invL_expLemma} on each exponential term give:
       {\small\begin{align*}
		K_1(t,n) = \left[\sum_{j = 0}^{\infty}(-1)^n\int_0^{\infty} \exp(-\lambda n^2 \tau)F(\tau,t)d\tau\right]\ast K_2(t,n),
	\end{align*}}%
	where $F(\tau,t) := 2\alpha \tau t^{-(2\alpha +1)} M_{2\alpha}(\tau t^{-2\alpha})\frac{l_1+jl_2}{\sqrt{4\pi t^3}}\exp(\frac{-(l_1+jl_2)^2}{4\tau})$.
	Now adding all the Fourier sine terms with coefficients provide:
	{\small\begin{align*}
		k_1(t,y) &= \sum_{n \geq 1} K_1(t,n) \sin(ny) \\
		&= \sum_{n \geq 1} \int_{0}^{t}d\tau_1\sum_{j = 0}^{\infty}(-1)^n\int_0^{\infty}d\tau \exp(-\lambda n^2 \tau)F(\tau,\tau_1) K_2(t-\tau_1,n) \sin(ny) \\
		&= \sum_{n \geq 1} \int_{0}^{t}d\tau_1\sum_{j = 0}^{\infty}(-1)^n\int_0^{\infty}d\tau \exp(-\lambda n^2 \tau)F(\tau,\tau_1) \left[\frac{2}{\pi} \int_{0}^{\pi} d\eta k_2(t-\tau_1,\eta)\sin(n\eta)\right] \sin(ny) \\
		&=  \int_{0}^{t}d\tau_1\sum_{j = 0}^{\infty}(-1)^n\int_0^{\infty}d\tau F(\tau,\tau_1) \left[\frac{2}{\pi} \int_{0}^{\pi} d\eta k_2(t-\tau_1,\eta) \sum_{n \geq 1} \exp(-\lambda n^2 \tau)\sin(n\eta) \sin(ny)\right].  \\
	\end{align*}}%
   We have applied Fubini's theorem to exchange integration and summation. Further taking absolute value on both sides and apply Lemma \ref{estimate_2} give:
	{\small\begin{align*}
		&|k_1(t,y)|\\
		& \leq \int_{0}^{t}d\tau_1\sum_{j = 0}^{\infty}\int_0^{\infty}d\tau F(\tau,\tau_1) \left|\frac{2}{\pi} \int_{0}^{\pi} d\eta k_2(t-\tau_1,\eta) \sum_{n \geq 1} \exp(-\lambda n^2 \tau)\sin(n\eta) \sin(ny)\right|  \\
		& \leq \int_{0}^{t}d\tau_1\sum_{j = 0}^{\infty}\int_0^{\infty}d\tau F(\tau,\tau_1) \left\|k_2(t-\tau_1,.)\right\|_{L^{\infty}(0,\pi)}  \\
		& \leq \left\|k_2(.,.)\right\|_{L^{\infty}(0,t;L^{\infty}(0,\pi))} \int_{0}^{t}d\tau_1\sum_{j = 0}^{\infty}\int_0^{\infty}d\tau F(\tau,\tau_1).
	\end{align*}}%
	Again using Efros theorem and Lemma \ref{estimate_1}, we have our proof:
	{\small\begin{align*}
		\|k_1(.,.)\|_{L^{\infty}(0,t;L^{\infty}(0,\pi))} \leq \left[1+\frac{t^{\alpha}\Gamma(2-\alpha)}{l_2 \Lambda^{(1-\alpha)}}\right] \exp\left(-\Lambda\left(\frac{l_1}{t^{\alpha}}\right)^{1/(1-\alpha)}\right) \|k_2(.,.)\|_{L^{\infty}(0,t;L^{\infty}(0,\pi))}.
	\end{align*}}%
	\hfill\end{proof}

\begin{theorem}[Convergence of DNWR for sub-diffusion in $2D$ ]
	For fixed $T>0$, $0 < \nu \leq 1/2$ and $\theta_i = 1/(1+\sqrt{\kappa_{i+1}/\kappa_i}), 1 \leq i \leq m$ and $\theta_i = 1/(1+\sqrt{\kappa_{i}/\kappa_{i+1}}), m+1 \leq i \leq 2m$ , the DNWR algorithm in $2D$ for the update error equation \eqref{proof10} for $2m+1( m \in \mathbb{N})$ subdomains converge super-linearly with the estimate:
	\begin{align*}
		&\max_{1\leq i \leq 2m} \|w_i^{(k)}\|_{L^{\infty}(0,T;L^{\infty}(0,\pi))} \leq d^k \exp(-A k^{1/(1-\nu)})\max_{1\leq j \leq 2m} \|w_j^{(0)}\|_{L^{\infty}(0,T;L^{\infty}(0,\pi))},
	\end{align*} 
	where,  $h_j = a_j/\sqrt{\kappa_j}$, $h = \min_{1\leq j \leq 2m+1}h_j/2$, $ A = (1-\nu)\nu^{\nu/(1-\nu)}(2h/T)^{1/(1-\nu)}$, $d= \|D\|_{\infty}$, and $\|.\|_{\infty}$ imply the infinity norm of matrix $D$, given in \eqref{eq2}. 
\end{theorem}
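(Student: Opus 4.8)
The plan is to follow the diffusion-wave argument (the case $1/2<\nu<1$) rather than the $1$D sub-diffusion one, even though the hypothesis here is $0<\nu\le 1/2$. The reason is that the relevant symbol is now $\sqrt{s^{2\nu}+\kappa n^2}$ instead of $s^{\nu}$: the positivity of the inverse transforms asserted in Lemma \ref{PositivityLemma} rests on the $M$-Wright representation of $\mathcal{L}^{-1}\{e^{-ls^{\alpha}}\}$ from Lemma \ref{invL_expLemma} and is unavailable for the composite symbol $\sqrt{s^{2\nu}+\kappa n^2}$. Consequently the sharp $L^{1}$ bounds of Lemma \ref{lemma_2} must be replaced by the decomposition-into-exponentials estimates in the style of Lemma \ref{lemma_1}, now carried out in the $L^\infty(0,T;L^\infty(0,\pi))$ norm through Lemma \ref{lemma_3}, whose scalar decay factor is supplied by Lemma \ref{estimate_1} and whose spatial Fourier summation is absorbed by Lemma \ref{estimate_2}.

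First I would start from the error recursion \eqref{proof10} and introduce the scaling factor $\zeta:=\exp\!\big(2h\sqrt{s^{2\nu}+\kappa n^2}\big)$ with $h=\min_j h_j/2$, setting $\hat{\chi}_i^{(k)}(s,n):=\zeta^{k}\hat{W}_i^{(k)}(s,n)$. This turns \eqref{proof10} into $\hat{\chi}_i^{(k)}=\sum_{j=1}^{2m}\zeta\hat{\rho}_{i,j}\,\hat{\chi}_j^{(k-1)}$, where each multiplier $\zeta\hat{\rho}_{i,j}$ has exactly the hyperbolic form treated in Lemma \ref{lemma_1}, only with $s^{\nu}$ replaced by $\sqrt{s^{2\nu}+\kappa n^2}$. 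The key structural point is that Lemma \ref{lemma_3} is a multiplicative operator bound in the $L^\infty(0,T;L^\infty(0,\pi))$ norm: whenever a symbol is written as a product of elementary factors $\exp(-l_1\sqrt{s^{2\nu}+\kappa n^2})/(1+\exp(-l_2\sqrt{s^{2\nu}+\kappa n^2}))$, its action is controlled by the product of the corresponding scalar factors. Decomposing every $\zeta\hat{\rho}_{i,j}$ into such factors exactly as in the proof of Lemma \ref{lemma_1} (splitting the $\cosh/\cosh$ and $\sinh/\sinh$ ratios and expanding $(1+e^{-2l\sqrt{s^{2\nu}+\kappa n^2}})^{-1}$ binomially) reproduces, entry by entry, the bounds $d_{i,j}$ recorded in \eqref{eq2}, with the homogeneous simplifications $\theta_i^{*}=1/2$ and $\kappa_{i+1}/\kappa_i=1$.

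This yields $\|\mathcal{L}^{-1}\{\zeta\hat{\rho}_{i,j}\}\|\le d_{i,j}$, hence after summing over $j$ and maximising over $i$ the one-step contraction $\max_i\|\chi_i^{(k)}\|_{L^\infty(0,T;L^\infty(0,\pi))}\le d\,\max_j\|\chi_j^{(k-1)}\|_{L^\infty(0,T;L^\infty(0,\pi))}$ with $d=\|D\|_\infty$, and iterating gives $\max_i\|\chi_i^{(k)}\|\le d^{k}\max_j\|w_j^{(0)}\|$. Finally I would undo the scaling: since $\hat{W}_i^{(k)}=\zeta^{-k}\hat{\chi}_i^{(k)}=\exp(-2kh\sqrt{s^{2\nu}+\kappa n^2})\,\hat{\chi}_i^{(k)}$, applying Lemma \ref{lemma_3} with $l_1=2kh$ and letting $l_2\to\infty$ (so that the denominator and its prefactor tend to $1$) bounds the extra multiplier by $\exp\!\big(-\Lambda(2kh/T^{\nu})^{1/(1-\nu)}\big)=\exp(-Ak^{1/(1-\nu)})$, with $\Lambda=(1-\nu)\nu^{\nu/(1-\nu)}$ and $A$ as stated. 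Combining the two estimates gives the claimed super-linear bound.

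I expect the main obstacle to be the bookkeeping in the decomposition step: verifying that, for each index pair $(i,j)$, the symbol $\zeta\hat{\rho}_{i,j}$ splits into elementary factors to which Lemma \ref{lemma_3} genuinely applies. In particular one must check that every exponent that appears is nonnegative, which is exactly where the choice $h=\min_j h_j/2$ and the ordering $h\le h_i$ are used to keep each application within the admissible range $l_1<l_2$, and one must justify interchanging the binomial series, the Fourier summation over $n$ and the $\tau$-integral (Fubini together with the decay of the $M$-Wright kernel, as already done inside the proof of Lemma \ref{lemma_3}). Once these decompositions are in place, the per-entry estimates are identical in form to those of the diffusion-wave theorem, so the matrix $D$ of \eqref{eq2} is reproduced verbatim and the conclusion follows.
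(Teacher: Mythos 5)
Your proposal matches the paper's proof essentially step for step: the paper likewise observes that the composite symbol $\sqrt{s^{2\nu}+\kappa n^2}$ forces the diffusion-wave-style exponential decomposition, introduces the same scaling $\exp(2h\sqrt{s^{2\nu}+\kappa n^2})$ with $h=\min_j h_j/2$, bounds each scaled entry via Lemma \ref{lemma_3} to reproduce the matrix $D$ of \eqref{eq2}, and undoes the scaling by applying Lemma \ref{lemma_3} with $l_2\to\infty$ to obtain the factor $\exp(-Ak^{1/(1-\nu)})$. Your write-up is, if anything, more explicit than the paper's brief sketch about why the $1$D sub-diffusion positivity route is unavailable and where the condition $h\le h_i$ enters the bookkeeping.
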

\begin{proof}
	As one can see that the estimate for diffusion-wave case for $1D$ and sub-diffusion case for $2D$ is exactly same, so we only go through the proof briefly and only mention those terms which are different from $1D$ case. 
	
	 Define $\xi := \exp(2h\sqrt{s^{2\nu}+kn^2})$, where $h = \min_j h_j/2$. Setting $\hat{V}^{(k)}_i(s,n) := \xi^k \hat{W}^{(k)}_i(s,n)$ transform the error update equation \eqref{proof10} to:
	\begin{equation} \label{proof11}
		\hat{V}_i^{(k)}(s,n) = \sum_{j=1}^{n} \xi\hat{\rho}_{i,j}(\sqrt{s^{2\nu}+kn^2}) \hat{V}_j^{(k-1)}(s,n).
	\end{equation}
   Now first simplify the cosine and sine term to exponential as given in the proof of Lemma \ref{lemma_1} and then repeatedly applying Lemma \ref{lemma_3} on the simplify terms, we obtain:
    \begin{align} \label{eq4}
	\left\|v_i^{(k)}(.,.)\right\|_{L^{\infty}(0,T;L^{\infty}(0,\pi))} 
	& \leq d \max_{1\leq j \leq 2m} \left\|v_j^{(k-1)}(.,.)\right\|_{L^{\infty}(0,T;L^{\infty}(0,\pi))} \nonumber\\
	& \leq d^{k} \max_{1\leq j \leq 2m} \left\|w_j^{(0)}(.,.)\right\|_{L^{\infty}(0,T;L^{\infty}(0,\pi))},
   \end{align}
  where $d = \|D\|_{\infty}$. Now again applying the Lemma \ref{lemma_3} letting $l_2 \to \infty$ on $\hat{W}^{(k)}_i(s,n) := \xi^{-k} \hat{V}^{(k)}_i(s,n)$ give:
	\begin{equation} \label{eq5}
		\|w_i(.,.)\|_{L^{\infty}(0,t;L^{\infty}(0,\pi))} \leq  \exp\left(-Ak^{1/(1-\nu)}\right) \|v_i(.,.)\|_{L^{\infty}(0,t;L^{\infty}(0,\pi))}.
    \end{equation}
    Finally combining \eqref{eq4} \& \eqref{eq5} and taking maximum over $i$ we have our estimate: 
    \begin{align*}
    	&\max_{1\leq i \leq 2m} \|w_i^{(k)}\|_{L^{\infty}(0,T;L^{\infty}(0,\pi))} \leq d^k \exp(-A k^{1/(1-\nu)})\max_{1\leq j \leq 2m} \|w_j^{(0)}\|_{L^{\infty}(0,T;L^{\infty}(0,\pi))}.
    \end{align*} 
\hfill\end{proof}	
	
	\begin{remark}
	  We keep the $2D$ bounded multi-subdomain diffusion-wave proof for future work due to some estimation issue. If we take $y$-axis unbounded then the two sub-domain proof can be easily done. We leave that one because the procedure is same as the two sub-domain NNWR diffusion-wave proof, given in.
	\end{remark}

\appendix
\section{Necessary calculation for DNWR}\label{AppendixA}
To simplify the calculation we assign:
\begin{align}
	\hat{z}_{\mu}^{(k)}(s) &= \partial_{x} \hat{u}_{\mu+1}^{(k)}(x_{\mu},s) \textit{ for } 1 \leq {\mu} \leq m, \\
	\hat{z}_{\iota}^{(k)}(s) &= -\partial_{x} \hat{u}_{\iota}^{(k)}(x_{\iota},s) \textit{ for } m+1 \leq {\iota} \leq 2m.
\end{align}
Solving the BVP in Dirichlet and Neumann step in \eqref{DNWR1}-\eqref{DNWR2} we get
\begin{align}
	\hat{u}_{m+1}^{(k)}(x,s)&= \frac{1}{\sigma_{m+1}}\left(\hat{w}_{m+1}^{(k-1)}\sinh(x-x_m)\sqrt{s^{2\nu}/\kappa_{m+1}}+\hat{w}_{m}^{(k-1)}\sinh(x_{m+1}-x)\sqrt{s^{2\nu}/\kappa_{m+1}}\right), \label{DNWR5}\\
	\intertext{for $1 \leq \mu \leq m$}
	\hat{u}_{\mu}^{(k)}(x,s)&= \frac{1}{\gamma_{\mu}}\left(\frac{\kappa_{\mu+1}\hat{z}_{\mu}^{(k)}}{\sqrt{\kappa_{\mu}s^{2\nu}}}\sinh(x-x_{\mu-1})\sqrt{s^{2\nu}/\kappa_{\mu}}+\hat{w}_{\mu-1}^{(k-1)}\cosh(x_{\mu}-x)\sqrt{s^{2\nu}/\kappa_{\mu}}\right), \label{DNWR6}\\
	\intertext{for $m+2 \leq {\iota} \leq 2m+1$}
	\hat{u}_{\iota}^{(k)}(x,s)&= \frac{1}{\gamma_{\iota}}\left(\hat{w}_{\iota}^{(k-1)}\cosh(x-x_{\iota-1})\sqrt{s^{2\nu}/\kappa_{\iota}} + \frac{\kappa_{\iota-1}\hat{z}_{\iota-1}^{(k)}}{\sqrt{\kappa_{\iota}s^{2\nu}}}\sinh(x_{\iota}-x)\sqrt{s^{2\nu}/\kappa_{\iota}}\right), \label{DNWR7}
\end{align}
Substituting \eqref{DNWR6} into \eqref{DNWR3} and \eqref{DNWR7} into \eqref{DNWR4} and using the notation $\sigma_{\mu} := \sinh(\frac{a_{\mu}}{\sqrt{\kappa_{\mu}}}s^{\nu})$, $\gamma_{\mu} := \cosh(\frac{a_{\mu}}{\sqrt{\kappa_{\mu}}}s^{\nu})$ and define $\sigma_{\iota}, \gamma_{\iota}$ same way, the recurrence relation for $k\in \mathbb{N}$ becomes: 
\begin{align}
	\hat{w}_{\mu}^{(k)}(s)&= \frac{\theta_{\mu}}{\gamma_{\mu}}\left(\frac{\kappa_{\mu+1}\hat{z}_{\mu}^{(k)}}{\sqrt{\kappa_{\mu}s^{2\nu}}}\sigma_{\mu} + \hat{w}_{\mu-1}^{(k-1)}\right) +(1-\theta_{\mu})\hat{w}_{\mu}^{(k-1)}, \label{DNWR8}\\
	\hat{w}_{\iota}^{(k)}(s)&= \frac{\theta_{\iota}}{\gamma_{{\iota}+1}}\left(\hat{w}_{{\iota}+1}^{(k-1)} + \frac{\kappa_{\iota}\hat{z}_{{\iota}+1}^{(k)}}{\sqrt{\kappa_{{\iota}+1}s^{2\nu}}}\sigma_{{\iota}+1}\right) +(1-\theta_{\iota})\hat{w}_{\iota}^{(k-1)}, \label{DNWR9}
\end{align}
Where for $1 \leq \mu \leq m-1$
\begin{align}
	\hat{z}_{\mu}^{(k)} &= -\sqrt{\frac{s^{2\nu}}{\kappa_{\mu+1}}}\frac{\sigma_{\mu+1}}{\gamma_{\mu+1}}\hat{w}_{\mu}^{(k-1)} + \frac{\kappa_{\mu+2}}{\kappa_{\mu+1}\gamma_{\mu+1}}\hat{z}_{\mu+1}^{(k)}, \label{DNWR10}
\end{align}
for $m+2 \leq {\iota} \leq 2m+1$
\begin{align}
	\hat{z}_{\iota}^{(k)} &= \frac{\kappa_{{\iota}-1}}{\kappa_{\iota}\gamma_{\iota}}\hat{z}_{{\iota}-1}^{(k)} -\sqrt{\frac{s^{2\nu}}{\kappa_{\iota}}}\frac{\sigma_{\iota}}{\gamma_{\iota}}\hat{w}_{\iota}^{(k-1)}, \label{DNWR13}
\end{align}	
and
\begin{align}
	\hat{z}_m^{(k)} &= -\sqrt{\frac{s^{2\nu}}{\kappa_{m+1}}}\frac{\gamma_{m+1}}{\sigma_{m+1}}\hat{w}_m^{(k-1)} + \sqrt{\frac{s^{2\nu}}{\kappa_{m+1}}}\frac{1}{\sigma_{m+1}}\hat{w}_{m+1}^{(k-1)}, \label{DNWR11}\\
	\hat{z}_{m+1}^{(k)} &= \sqrt{\frac{s^{2\nu}}{\kappa_{m+1}}}\frac{1}{\sigma_{m+1}}\hat{w}_{m}^{(k-1)} -\sqrt{\frac{s^{2\nu}}{\kappa_{m+1}}}\frac{\gamma_{m+1}}{\sigma_{m+1}}\hat{w}_{m+1}^{(k-1)}. \label{DNWR12}
\end{align}

Choose: for $1 \leq \mu \leq m$,
$ \bar{w}_{\mu}^{(k)} = \gamma_{\mu} \hat{w}_{\mu}^k, \bar{z}_{\mu}^{(k)} = \frac{\sigma_{\mu}}{\sqrt{s^{2\nu}/\kappa_{\mu}}} \hat{z}_{\mu}^k$ and for $m+1 \leq {\iota} \leq 2m+1$,
$ \bar{w}_{\iota}^{(k)} = \gamma_{{\iota}+1} \hat{w}_{\iota}^k, \bar{z}_{\iota}^{(k)} = \frac{\sigma_{{\iota}+1}}{\sqrt{s^{2\nu}/\kappa_{{\iota}+1}}} \hat{z}_{\iota}^k$.
Obtain:
\begin{align}
	\intertext{for $1 \leq \mu \leq m$}
	\bar{w}_{\mu}^{(k)} &= \frac{\theta_{\mu}}{\gamma_{\mu-1}}\bar{w}_{\mu-1}^{(k-1)} +(1-\theta_\mu)\bar{w}_{\mu}^{(k-1)} + \theta_{\mu} \frac{\kappa_{\mu+1}}{\kappa_{\mu}}\bar{z}_{\mu}^{(k)}, \label{DNWR14}\\
	\intertext{for $m+1 \leq {\iota} \leq 2m$}
	\bar{w}_{\iota}^{(k)} &= \frac{\theta_{\iota}}{\gamma_{{\iota}+2}}\bar{w}_{{\iota}+1}^{(k-1)} +(1-\theta_{\iota})\bar{w}_{\iota}^{(k-1)} + \theta_{\iota} \frac{\kappa_{\iota}}{\kappa_{{\iota}+1}}\bar{z}_{\iota}^{(k)}, \label{DNWR15}
\end{align}
and
\begin{align}
	\intertext{for $1 \leq \mu \leq m-1$}
	\sqrt{\frac{\kappa_{\mu+1}}{\kappa_{\mu}}}\bar{z}_{\mu}^{(k)} &= -\frac{\sigma_{\mu} \sigma_{\mu+1}}{\gamma_{\mu}\gamma_{\mu+1}}\bar{w}_{\mu}^{(k-1)} + \frac{\kappa_{\mu+2}}{\kappa_{\mu+1}}\frac{\sigma_{\mu}}{\sigma_{\mu+1}\gamma_{\mu+1}}\bar{z}_{\mu+1}^{(k)}, \label{DNWR16}\\
	\sqrt{\frac{\kappa_{m+1}}{\kappa_{m}}}\bar{z}_m^{(k)} &= -\frac{\sigma_m \gamma_{m+1}}{\gamma_{m}\sigma_{m+1}}\bar{w}_m^{(k-1)} + \frac{\sigma_m}{\sigma_{m+1}\gamma_{m+1}}\bar{w}_{m+1}^{(k-1)}, \label{DNWR17}\\
	\sqrt{\frac{\kappa_{m+1}}{\kappa_{m+2}}}\bar{z}_{m+1}^{(k)} &= \frac{\sigma_{m+2}}{\gamma_{m}\sigma_{m+1}}\bar{w}_{m}^{(k-1)} - \frac{\gamma_{m+1}\sigma_{m+2}}{\sigma_{m+1}\gamma_{m+2}}\bar{w}_{m+1}^{(k-1)}, \label{DNWR18}\\
	\intertext{for $m+2 \leq {\iota} \leq 2m$}
	\sqrt{\frac{\kappa_{\iota}}{\kappa_{{\iota}+1}}}\bar{z}_{\iota}^{(k)} &= \frac{\kappa_{{\iota}-1}}{\kappa_{\iota}}\frac{\sigma_{{\iota}+1}}{\sigma_{\iota}\gamma_{\iota}}\bar{z}_{{\iota}-1}^{(k)} -\frac{\sigma_{\iota} \sigma_{{\iota}+1}}{\gamma_{\iota}\gamma_{{\iota}+1}}\bar{w}_{\iota}^{(k-1)}, \label{DNWR19}
\end{align}
Define: for $1 \leq \mu \leq m$ and $\left(\bar{\mathbf{w}}_L^{(k)}\right)_{\mu} = \bar{w}_{\mu}^{(k)}, \left(\bar{\mathbf{w}}_R^{(k)}\right)_{\mu} = \bar{w}_{m+\mu}^{(k)}, \left(\bar{\mathbf{z}}_L^{(k)}\right)_{\mu} = \bar{z}_{\mu}^{(k)}, \left(\bar{\mathbf{z}}_R^{(k)}\right)_{\mu} = \bar{z}_{\mu+m}^{(k)}$.
From \eqref{DNWR14} \eqref{DNWR15} 
\begin{align} \label{DNWR20}
	\bar{\mathbf{w}}_L{(k)} = T_L \bar{\mathbf{w}}_L^{(k-1)} + P_L \bar{\mathbf{z}}_L^{(k)}, \bar{\mathbf{w}}_R{(k)} = T_R \bar{\mathbf{w}}_R^{(k-1)} + P_R \bar{\mathbf{z}}_R^{(k)},
\end{align}
From \eqref{DNWR16} \eqref{DNWR19}
\begin{align} \label{DNWR21}
	U_L\bar{\mathbf{z}}_L{(k)} = D_L \bar{\mathbf{w}}_L^{(k-1)} + E_L \bar{\mathbf{w}}_R^{(k-1)}, 	U_R\bar{\mathbf{z}}_R{(k)} = D_R \bar{\mathbf{w}}_R^{(k-1)} + E_R \bar{\mathbf{w}}_L^{(k-1)},
\end{align}
\begin{equation*}
	T_L = 
	\begin{bmatrix}
		1-\theta_1\\
		\frac{\theta_2}{\gamma_1} & 1-\theta_2 \\
		\quad\quad \ddots & \quad\ddots \\
		& \frac{\theta_m}{\gamma_{m-1}}& 1-\theta_m\\      
	\end{bmatrix} \quad
	T_R =
	\begin{bmatrix}
		1-\theta_{m+1} & \frac{\theta_{m+1}}{\gamma_{m+3}}\\
		& 1-\theta_{m+2}	    & \frac{\theta_{m+2}}{\gamma_{m+4}}\\
		& \quad\ddots         & \quad\ddots \\
		&                &    \ddots     &\frac{\theta_{2m-1}}{\gamma_{2m+1}} \\
		&                &        & 1 - \theta_{2m}
	\end{bmatrix}
\end{equation*}
\begin{equation*}
	U_L = 
	\begin{bmatrix}
		\sqrt{\frac{\kappa_2}{\kappa_1}} & -\frac{\kappa_3}{\kappa_2}\frac{\sigma_1}{\sigma_2 \gamma_2}\\
		\quad\ddots & \quad\ddots \\
		&\ddots &  -\frac{\kappa_{m+1}}{\kappa_{m}}\frac{\sigma_{m-1}}{\sigma_{m} \gamma_m} \\
		&        &  \sqrt{\frac{\kappa_{m+1}}{\kappa_m}}   
	\end{bmatrix} \quad
	U_R =
	\begin{bmatrix}
		\sqrt{\frac{\kappa_{m+1}}{\kappa_{m+2}}}\\
		-\frac{\kappa_{m+1}}{\kappa_{m+2}}\frac{\sigma_{m+3}}{\sigma_{m+2} \gamma_{m+2}} & \ddots \\
		\quad\ddots &  \quad\ddots \\
		&-\frac{\kappa_{2m-1}}{\kappa_{2m}}\frac{\sigma_{2m+1}}{\sigma_{2m} \gamma_{2m}}  &  \sqrt{\frac{\kappa_{2m}}{\kappa_{2m+1}}}   
	\end{bmatrix}
\end{equation*}
and 
\begin{align*}
	D_L &= diag \left(-\frac{\sigma_1 \sigma_2}{\gamma_1 \gamma_2},\cdots,-\frac{\sigma_{m-1} \sigma_m}{\gamma_{m-1} \gamma_m},-\frac{\sigma_m \gamma_{m+1}}{\gamma_m \sigma_{m+1}}\right) \\
	D_R &= diag \left(-\frac{\gamma_{m+1} \sigma_{m+2}}{\sigma_{m+1} \gamma_{m+2}}-\frac{\sigma_{m+2} \sigma_{m+3}}{\gamma_{m+2} \gamma_{m+3}},\cdots,-\frac{\sigma_{2m} \sigma_{2m+1}}{\gamma_{2m} \gamma_{2m+1}},\right) \\
	P_L & =  diag \left(\theta_1\frac{\kappa_2}{\kappa_1},\cdots,\theta_m\frac{\kappa_{m+1}}{\kappa_m}\right), \\
	P_R & =  diag \left(\theta_{m+1}\frac{\kappa_{m+1}}{\kappa_{m+2}},\cdots,\theta_{2m}\frac{\kappa_{2m}}{\kappa_{2m+1}}\right), \\
\end{align*}
and
\begin{equation*}
	(E_L)_{m,1} = \frac{\sigma_m}{\sigma_{m+1} \gamma_{m+2}}, (E_R)_{1,m} = \frac{\sigma_{m+2}}{\gamma_{m}\sigma_{m+1} }, 
\end{equation*}
Using \eqref{DNWR20} \& \eqref{DNWR21}:
\begin{align}
	\bar{\mathbf{w}}_L^{(k)} &= (T_L + P_LU_L^{-1}D_L)\bar{\mathbf{w}}_L^{(k-1)} + P_LU_L^{-1}E_L\bar{\mathbf{w}}_R^{(k-1)}, \\
	\bar{\mathbf{w}}_R^{(k)} &= (T_R + P_RU_R^{-1}D_R)\bar{\mathbf{w}}_r^{(k-1)} + P_RU_R^{-1}E_R\bar{\mathbf{w}}_L^{(k-1)},
\end{align}
Now introduce $i,j$ for matrix indices, we have:
\begin{equation*}
	(P_LU_L^{-1})_{i,j} = 
	\begin{cases}
		\theta_i \sqrt{\frac{\kappa_{i+1}}{\kappa_i}}, i=j, \\
		\theta_i \sqrt{\frac{\kappa_{j+1}}{\kappa_i}} \frac{\sigma_i}{\gamma_{i+1}\cdots\gamma_j \sigma_j}, i<j, \\
		0, \textit{ otherwise }
	\end{cases}
	\quad	
	(P_RU_R^{-1})_{i,j} = 
	\begin{cases}
		\theta_{m+i} \sqrt{\frac{\kappa_{m+i+1}}{\kappa_{m+i}}}, i=j, \\
		\theta_{m+i} \sqrt{\frac{\kappa_{m+j+1}}{\kappa_{m+i}}} \frac{\sigma_{m+j}}{\sigma_{m+i}\gamma_{m+j+1}\cdots\gamma_{m+i}}, i>j, \\
		0, \textit{ otherwise }
	\end{cases}
\end{equation*}

\bibliographystyle{siam}
\bibliography{paper}
%\nocite{*}
\end{document}